\theoremstyle{plain}
\newtheorem{theorem}{\indent\sc Theorem}[section]
\newtheorem{lemma}[theorem]{\indent\sc Lemma}
\newtheorem{corollary}[theorem]{\indent\sc Corollary}
\newtheorem{claim}[theorem]{\indent\sc Claim}
\newtheorem{conjecture}[theorem]{\indent\sc Conjecture}
\theoremstyle{definition}
\newtheorem{definition}[theorem]{\indent\sc Definition}
\newtheorem{remark}[theorem]{\indent\sc Remark}
\newcommand{\vol}{\operatorname{vol}}
\newcommand{\Det}{\operatorname{Det}}
\newcommand{\lct}{\operatorname{lct}}
\renewcommand{\theenumi}{$\mathrm{\roman{enumi}}$}
\title[K-stability of Fano varieties and anticanonical divisors]
{On the K-stability of Fano varieties and anticanonical divisors} 
\author{Kento Fujita, Yuji Odaka}
\date{}
\begin{document}

\maketitle

\footnote{ 2010 \textit{Mathematics Subject Classification}.
Primary 14J45; Secondary 14L24.
}
\footnote{ \textit{Key words and phrases}.
Fano varieties, K-stability, K\"ahler-Einstein metrics
}
\footnote{K.F.\ is Partially supported by JSPS Fellowship for Young Scientists.
Y.O.\ is Partially supported by JSPS Kakenhi no. 30700356 (Wakate (B)).
}

\begin{abstract}
We apply a recent theorem of Li and the first author to 
give some criteria for the K-stability of Fano varieties in terms of anticanonical $\mathbb{Q}$-divisors. 
First, we propose a condition in terms of certain anti-canonical $\mathbb{Q}$-divisors 
of given Fano variety, which we conjecture to be equivalent to the K-stability. 
We prove that it is at least sufficient condition 
and also relate to the Berman-Gibbs stability. 
We also give another algebraic proof of the K-stability of Fano varieties 
which satisfy Tian's alpha invariants condition. 
\end{abstract}

\section*{Introduction}

In this short paper, we discuss the K-stability of Fano varieties, which is an 
algebro-geometric stability condition originally motivated by studies of K\"ahler metrics. 
Indeed, as expected, when the base field is the complex number field, it is recently established that the existence of positive scalar curvature \textit{K\"ahler-Einstein metrics}, i.e., K\"ahler metrics with constant Ricci 
curvature, is actually equivalent to the algebro-geometric condition ``K-stability", 
by the works of \cite{DT92, tian1, don05, CT, stoppa, mab1, mab2, Ber12} and recent 
celebrated \cite{CDS1, CDS2, CDS3, tian2}. This equivalence had been known before 
as the Yau-Tian-Donaldson conjecture (for the case of Fano varieties). 

It also turned out that such canonical K\"ahler metrics and  
K-stability play crucial roles for nice moduli theory (cf., \cite{FS90,Od10,DS14,Od13b}), 
and indeed recently 
\cite{OSS16} constructed compact moduli spaces 
 of smoothable 
 K\"ahler-Einstein Fano varieties of two dimension and \cite{LWX14,SSY14,Od15} extended 
 to higher dimensional case. The current approaches heavily depend on \cite{DS14} and 
again the above-mentioned Yau-Tian-Donaldson equivalence. 

However, it has been known as a difficult problem to test K-stability for given Fano varieties. 
Our purpose here is to develop algebraic studies of K-stability of Fano varieties one step 
further, mainly after \cite{Li16,Fjt16}.

In this paper, we treat a stronger version of the K-stability introduced 
by Dervan \cite{Der14} 
and Boucksom-Hisamoto-Jonsson \cite{BHJ15} which is called 
uniform K-stability. The notion is expected to be eventually 
equivalent to the (original) K-stability. 

We start with fixing our notation as follows.

\subsection*{Notation}

\begin{itemize}

\item We work over an arbitrary algebraic closed field of characteristic zero throughout the paper. 

\item $X$ is a $\mathbb{Q}$-Fano variety of dimension $n$, 
which means a log terminal projective variety with ample $\mathbb{Q}$-Cartier anticanonical divisor. 

\item (cf., \cite[Definition 2.24]{KM98}) 
$F$ is a prime divisor over $X$, which means the equivalence class of an 
irreducible reduced Weil divisor on a normal blow up $Y$ of $X$ up to strict transform. 
In this paper, we occasionally denote the blow up by $\sigma\colon Y\to X$ . 
See \cite[\S 2.3]{KM98} for the details of the basic related materials. 

\item For $k\in\mathbb{Z}_{\geq 0}$ and $x\in\mathbb{R}_{\geq 0}$, let $H^0(-kK_X-xF)$ be the 
subspace of $H^0(-kK_X)$ whose sections vanishing along the generic point of $F$ 
at least $x$ times. 

\item For $x\in\mathbb{R}_{\geq 0}$, we set 
\[
\vol(-K_X-xF):=\limsup_{k\to\infty}\frac{h^0(-kK_X-kxF)}{k^n/n!}.
\]

\item $A_{X}(F)$ denotes the log discrepancy of $X$ along $F$ and 
$\tau(F)$ denotes the pseudo-effective threshold of $-K_{X}$ with respect to $F$, i.e., 
$$\tau(F):=\sup \left\{\frac{a}{k} \mid a,k\in \mathbb{Z}_{>0} \text{ s.t., } H^0(-kK_X-aF)\neq 0\right\}.$$
We also denote the log discrepancy of $(X, \Delta)$
along $F$ by $A_{(X, \Delta)}(F)$, where $\Delta$ is an $\mathbb{R}$-divisor on $X$ 
with $K_X+\Delta$ $\mathbb{R}$-Cartier.  

\item For an effective $\mathbb{R}$-Cartier divisor $D$ on $X$, 
\[
\lct(X; D):=\max\{c\in\mathbb{R}_{\geq 0}\,|\, (X, cD):\text{ log canonical}\}
\]
be the log canonical threshold of $X$ along $D$. 
\end{itemize}

A key notion we introduce is the following special type of anticanonical $\mathbb{Q}$-divisors. 

\begin{definition}\label{basis.div.def}
Let $k$ be a positive integer. 
Given any basis 
\[
s_{1},\dots,s_{h^{0}(-kK_{X})}\] 
of $H^{0}(-kK_{X})$, taking the corresponding 
divisors $D_{1},\dots,D_{h^{0}(-kK_{X})}$ ($D_{i}\sim -kK_{X}$), 
we get an anticanonical $\mathbb{Q}$-divisor 
\[
D:=\frac{D_{1}+\cdots+D_{h^{0}(-kK_{X})}}{k\cdot h^{0}(-kK_{X})}.
\] 
We call this kind of anticanonical $\mathbb{Q}$-divisor an 
\textit{anticanonical $(\mathbb{Q}$-$)$divisor of $k$-basis type}. 
\end{definition}

\begin{definition}\label{delta_dfn}
For $k\in\mathbb{Z}_{>0}$, set
\[
\delta_{k}(X):=\inf_{
\substack{
(-K_{X}\sim_\mathbb{Q}) D;\\
D:\,\,k\text{-basis type}
}} \lct(X;D).
\]
Moreover, we define 
\[
\delta(X):=\limsup_{k\to \infty}\delta_{k}(X).
\]
\end{definition}

Then we prove the following criterion.  

\begin{theorem}[K-stability criteria via basis type divisors]\label{paraph.intro}
Let $X$ be a $\mathbb{Q}$-Fano variety.  
If 
$\delta(X)> 1$ $($resp., $\ge 1)$
then $(X,-K_{X})$ is uniformly K-stable $($resp., K-semistable$)$.

\end{theorem}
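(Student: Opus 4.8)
The plan is to convert the algebraic invariant $\delta(X)$ into the valuative language of the Li--Fujita criterion and then quote that criterion. Recall from \cite{Li16,Fjt16} that $X$ is uniformly K-stable exactly when there is an $\varepsilon>0$ with $A_X(F)\ge(1+\varepsilon)S(F)$ for every prime divisor $F$ over $X$, and K-semistable exactly when $A_X(F)\ge S(F)$ for all such $F$, where
\[
S(F):=\frac{1}{(-K_X)^n}\int_0^{\tau(F)}\vol(-K_X-xF)\,dx .
\]
Equivalently, $X$ is uniformly K-stable iff $\inf_F A_X(F)/S(F)>1$ and K-semistable iff this infimum is $\ge 1$. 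Hence it suffices to prove the single inequality $\delta(X)\le \inf_F A_X(F)/S(F)$: once this is known, $\delta(X)>1$ (resp.\ $\ge 1$) forces the infimum to exceed (resp.\ reach) $1$, giving both assertions at once.

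First I would fix a prime divisor $F$ over $X$ and, for each $k$, bound $\delta_k(X)$ using $F$. The order-of-vanishing filtration $H^0(-kK_X)\supseteq H^0(-kK_X-F)\supseteq H^0(-kK_X-2F)\supseteq\cdots$ admits an adapted basis $s_1,\dots,s_{h^0(-kK_X)}$, and for the associated $k$-basis type divisor $D$ one computes, by the usual count of jumping numbers,
\[
\operatorname{ord}_F(D)=\frac{1}{k\,h^0(-kK_X)}\sum_i\operatorname{ord}_F(s_i)=\frac{1}{k\,h^0(-kK_X)}\sum_{j\ge1}h^0(-kK_X-jF)=:S_k(F).
\]
Since the log canonical threshold is computed by valuations, $\lct(X;D)\le A_X(F)/\operatorname{ord}_F(D)$, and because $D$ is one competitor in the infimum defining $\delta_k$,
\[
\delta_k(X)\le \lct(X;D)\le \frac{A_X(F)}{S_k(F)} .
\]
Only this easy inequality is needed for the theorem; the reverse bound $\delta_k(X)=\inf_F A_X(F)/S_k(F)$, obtained by observing that the adapted basis in fact \emph{maximises} $\operatorname{ord}_F$ over all bases, would yield the exact identity relevant to the conjectural converse.

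Next I would show $S_k(F)\to S(F)$ as $k\to\infty$. Writing $S_k(F)=\int_0^\infty f_k(t)\,dt$ with $f_k(t)=h^0(-kK_X-\lceil kt\rceil F)/h^0(-kK_X)$, the pointwise limit $f_k(t)\to \vol(-K_X-tF)/(-K_X)^n$ follows from the existence of the volume as a genuine limit for big classes together with the continuity of $x\mapsto\vol(-K_X-xF)$; since $0\le f_k\le 1$ with support eventually inside $[0,\tau(F)]$, dominated convergence yields the claim. Taking $\limsup_k$ in the displayed inequality, and using that $\limsup_k\delta_k(X)=\delta(X)$ while $A_X(F)/S_k(F)\to A_X(F)/S(F)$, I obtain $\delta(X)\le A_X(F)/S(F)$ for every $F$, hence $\delta(X)\le\inf_F A_X(F)/S(F)$, and the theorem follows from the criterion recalled above.

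The main obstacle I expect is the convergence step. Although the pointwise convergence of $f_k$ is essentially the definition of volume plus continuity, making the passage to the integral rigorous requires uniform control of $h^0(-kK_X-jF)/(k^n/n!)$ against $\vol(-K_X-(j/k)F)$ across the whole range of $j$, together with a uniform bound on the support near the threshold $\tau(F)$; this is most cleanly organised through the Okounkov-body / concave-transform description of the filtered linear series, in the spirit of \cite{Li16,Fjt16}. The treatment of divisors $F$ lying on higher birational models $\sigma\colon Y\to X$, where $\operatorname{ord}_F$ and $A_X(F)$ must be read off from $Y$, is the remaining point demanding care, though the filtration computation above applies verbatim there.
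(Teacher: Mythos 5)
Your proposal is correct and follows essentially the same route as the paper: the core step (a basis adapted to the filtration $H^{0}(-kK_{X}-jF)$ produces a $k$-basis type divisor with $\operatorname{ord}_{F}(D)=\frac{1}{k\,h^{0}(-kK_{X})}\sum_{j\geq 1}h^{0}(-kK_{X}-jF)$, hence $\delta_{k}(X)\leq A_{X}(F)/S_{k}(F)$) is exactly the content of Lemma \ref{ld.of.bd}, the limit $S_{k}(F)\to S(F)$ is likewise invoked there, and your $(1+\varepsilon)$-form of the valuative criterion is equivalent to the paper's Theorem \ref{unif_thm}. The only differences are presentational: you use only the "attained by a compatible basis" half of Lemma \ref{ld.of.bd}, and you elaborate on the convergence $S_{k}(F)\to S(F)$, which the paper asserts without proof.
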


Reviewing our proof of Theorem \ref{paraph}, we expect that the converse is also true though 
some technical difficulties is preventing us from proving it at the moment. 

\begin{conjecture}[Main conjecture]\label{Main.conj}
Let $X$ be a $\mathbb{Q}$-Fano variety. Then the K-stability $($resp., K-semistability$)$ 
of $(X,-K_{X})$ is equivalent to $\delta(X) > 1$ $($resp., $\delta(X) \ge 1)$.
\end{conjecture}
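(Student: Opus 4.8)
The plan is to reduce the conjecture to the single identity
\[
\delta(X)=\inf_{F}\frac{A_X(F)}{S(F)},\qquad
S(F):=\frac{1}{\vol(-K_X)}\int_{0}^{\tau(F)}\vol(-K_X-xF)\,dx,
\]
where $F$ runs over all prime divisors over $X$, and then to feed this into the valuative criterion for K-stability of Fujita and Li \cite{Li16,Fjt16}. That criterion says that $(X,-K_X)$ is K-semistable exactly when $A_X(F)\ge S(F)$ for every $F$, and uniformly K-stable exactly when there is $\epsilon>0$ with $A_X(F)\ge(1+\epsilon)S(F)$ for all $F$, i.e.\ when $\inf_F A_X(F)/S(F)>1$. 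Granting the identity, the K-semistable case of the conjecture is then immediate, and $\delta(X)>1$ becomes equivalent to uniform K-stability.

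The first, purely finite-dimensional, step is an exact formula at each level $k$. Writing $N_k=h^0(-kK_X)$, I would compute, for fixed $F$, the largest vanishing order along $F$ of a $k$-basis type divisor $D$: maximizing $\operatorname{ord}_F(D)=\frac{1}{kN_k}\sum_i\operatorname{ord}_F(s_i)$ over bases $\{s_i\}$ of $H^0(-kK_X)$ is achieved by a basis adapted to the filtration $j\mapsto H^0(-kK_X-jF)$, and yields
\[
\sup_D\operatorname{ord}_F(D)=\frac{1}{kN_k}\sum_{j\ge1}h^0(-kK_X-jF)=:S_k(F).
\]
Combining this with the valuative formula $\lct(X;D)=\inf_F A_X(F)/\operatorname{ord}_F(D)$ and swapping the two infima (legitimate for a double infimum) gives the clean level-$k$ identity $\delta_k(X)=\inf_F A_X(F)/S_k(F)$. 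Since $S_k(F)\to S(F)$ for each fixed $F$ by the asymptotics $h^0(-kK_X-jF)\sim\vol(-K_X-(j/k)F)\,k^n/n!$, taking $\limsup_k$ already recovers the inequality $\delta(X)\le\inf_F A_X(F)/S(F)$ of Theorem \ref{paraph.intro}.

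The substance of the conjecture is the reverse inequality $\limsup_k\inf_F A_X(F)/S_k(F)\ge\inf_F A_X(F)/S(F)$, that is, the interchange of $\limsup_k$ with $\inf_F$. This would follow from a \emph{uniform} comparison $S_k(F)\le(1+\eta_k)S(F)$ with $\eta_k\to0$ independent of $F$, and I regard this as the main obstacle: the convergence $S_k(F)\to S(F)$ is pointwise and easy, but $F$ ranges over the non-compact, infinite family of all divisorial valuations on $X$, so no compactness is available for free. The route I would pursue is via Okounkov bodies: fixing an admissible flag on a resolution, each valuation $\operatorname{ord}_F$ determines a concave transform on the Okounkov body of $-K_X$, with $S(F)$ its average and $S_k(F)$ a lattice-point Riemann sum; a quantitative, flag-independent error bound for such Riemann sums, combined with the linear boundedness $\operatorname{ord}_F(s)\le\tau(F)\cdot k$ and Izumi-type control of $\tau(F)$ in terms of $A_X(F)$ to exclude degenerations, should upgrade pointwise to uniform convergence.

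Finally, two points are needed to match the statement exactly. First, the infimum defining $\inf_F A_X(F)/S(F)$ need not a priori be attained, so for a clean treatment of the strict case one should establish a minimizing valuation (plausibly quasi-monomial, arising from a Koll\'ar component), which also shows $\delta(X)$ is a genuine limit rather than only a $\limsup$. Second, the valuative criterion matches $\delta(X)>1$ with \emph{uniform} K-stability, whereas the conjecture asks for K-stability; closing this last gap requires the (independently deep) equivalence of K-stability and uniform K-stability for $\mathbb{Q}$-Fano varieties, and it is precisely the non-uniform case where the difficulty concentrates. The K-semistable equivalence, by contrast, needs none of this and is the part I expect to settle unconditionally once the uniform convergence above is in hand.
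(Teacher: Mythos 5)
This statement is the paper's \emph{Main Conjecture}, which the authors explicitly leave open (``some technical difficulties is preventing us from proving it at the moment''); the paper proves only the implication $\delta(X)>1 \Rightarrow$ uniform K-stability (resp.\ $\delta(X)\ge 1 \Rightarrow$ K-semistability), namely Theorem \ref{paraph.intro}. So there is no proof in the paper to compare yours against. The part of your proposal that is actually carried out coincides with the paper's argument for that one direction: your level-$k$ identity
\[
\delta_k(X)=\inf_F \frac{A_X(F)}{S_k(F)},\qquad S_k(F)=\frac{1}{k\,h^0(-kK_X)}\sum_{j\ge 1}h^0(-kK_X-jF),
\]
is exactly Lemma \ref{ld.of.bd} combined with the valuative description of the log canonical threshold, and letting $k\to\infty$ for fixed $F$ and invoking Theorem \ref{unif_thm} (equivalently \cite[Theorem 1.1]{Fjt16} via Lemma \ref{vol.tau}) recovers Theorem \ref{paraph.intro}.

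Beyond that, however, your proposal is a research program, not a proof: its two pivotal steps are asserted rather than established. First, the interchange of $\limsup_k$ with $\inf_F$, which you correctly isolate as the heart of the matter, requires the uniform estimate $S_k(F)\le(1+\eta_k)S(F)$ with $\eta_k\to 0$ independent of $F$; your sketch (``a quantitative, flag-independent error bound \dots should upgrade pointwise to uniform convergence'') does not produce it, and the naive Riemann-sum picture genuinely fails here because the concave transform of $\operatorname{ord}_F$ on a fixed Okounkov body has no regularity that is uniform in $F$. One really needs the uniform Izumi-type bound $\tau(F)\le C\cdot A_X(F)$ (i.e.\ $\alpha(X)>0$, which is the paper's Lemma \ref{alph_lem} only once $\alpha(X)>0$ is known) together with a uniform Fujita-approximation argument; this is precisely the technical content later supplied by Blum--Jonsson. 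Second, even granting that estimate, you would obtain ``uniformly K-stable $\Leftrightarrow\delta(X)>1$'' and ``K-semistable $\Leftrightarrow\delta(X)\ge 1$'', whereas Conjecture \ref{Main.conj} concerns K-stability: the cited criterion Theorem \ref{LiFujita} \eqref{LiFujita1} is one-directional for K-stability ($\beta(F)>0$ for all $F$ is sufficient, not known to be necessary), and the equivalence of K-stability with uniform K-stability that you invoke to close this gap was, at the time of this paper, itself a deep open problem (resolved only much later, by quite different methods). So as written, the strict case is proved in neither direction and the semistable case only conditionally on the uniform convergence. The program is sound --- it is essentially the route by which the conjecture was eventually settled --- but the two steps above are exactly the ``technical difficulties'' the paper alludes to, and they carry all the weight.
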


From Theorem \ref{paraph.intro}, we get that the Berman-Gibbs stability (resp., 
semistability) of $X$ implies K-stability (resp., K-semistability). 
The Berman-Gibbs stability was introduced in Berman \cite{Ber13} 
(see Definition \ref{Berman.Gibbs} for the definition) 
and the following algebraic result was known by \cite{fjtBG} (cf.\ \cite[\S 7]{Ber13}). 
However, we should emphasize that the proof of Theorem \ref{BGthm.intro} is 
much easier than the proof of \cite{fjtBG}. 

It also follows from our arguments 
that if the Berman-Gibbs stability is actually equivalent to the 
K-stability, it also implies 
our equivalence conjecture \ref{Main.conj}. 

\begin{theorem}[{cf.\ \cite{Ber13, fjtBG}}]\label{BGthm.intro}
Let $X$ be a $\mathbb{Q}$-Fano variety. If $X$ is Berman-Gibbs stable $($resp., 
Berman-Gibbs semistable$)$, then $X$ is uniformly K-stable $($resp., K-semistable$)$.
$($For the definition of Berman-Gibbs stability, see Section \ref{BG_section}.$)$ 
\end{theorem}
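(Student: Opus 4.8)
The plan is to deduce Theorem \ref{BGthm.intro} from Theorem \ref{paraph.intro}: it suffices to show that Berman--Gibbs semistability forces $\delta(X)\ge 1$ and that Berman--Gibbs stability forces $\delta(X)>1$. The whole problem thereby reduces to comparing the Berman--Gibbs condition, which is a statement about a single canonically defined divisor on the high power $X^{N_k}$ (here $N_k:=h^0(-kK_X)$), with the numbers $\delta_k(X)$, which concern basis type divisors on $X$ itself. After unwinding the definition in Section \ref{BG_section}, Berman--Gibbs semistability (resp.\ stability) at level $k$ amounts to the statement that the pair $\bigl(X^{N_k},\ \tfrac1k D^{(k)}\bigr)$ is log canonical (resp.\ klt), where $D^{(k)}\subset X^{N_k}$ is the zero divisor of the Vandermonde-type section $\det\bigl(s_i(x_j)\bigr)_{i,j}$ attached to a basis $s_1,\dots,s_{N_k}$ of $H^0(-kK_X)$. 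This divisor is independent of the chosen basis and lies in $|-kK_{X^{N_k}}|$, so $\tfrac1k D^{(k)}$ is an anticanonical $\mathbb{Q}$-divisor.

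The key step is to bound the log canonical threshold of this determinantal divisor by $\delta_k(X)$. First I would establish the valuative formula
\[
\delta_k(X)=\inf_F\frac{k\,N_k\,A_X(F)}{\sum_{a\ge 1}h^0(-kK_X-aF)},
\]
the infimum over all prime divisors $F$ over $X$; this follows from $\lct(X;D)=\inf_F A_X(F)/\operatorname{ord}_F(D)$ together with the fact that an $F$-adapted basis maximises $\operatorname{ord}_F$ of a basis type divisor, so that $\sum_i\operatorname{ord}_F(s_i)=\sum_{a\ge 1}h^0(-kK_X-aF)$. Then I would bound $\lct(X^{N_k};\tfrac1k D^{(k)})$ by the same expression. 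For each $F$ one forms the quasi-monomial valuation $v$ on $X^{N_k}$ obtained by applying $\operatorname{ord}_F$ with weight one in every factor; its log discrepancy is $A_{X^{N_k}}(v)=N_k\,A_X(F)$. Choosing the basis $\{s_i\}$ adapted to $F$, a direct expansion of $\det(s_i(x_j))$ shows that its lowest-order term along $v$ does not cancel, whence $v(D^{(k)})=\sum_i\operatorname{ord}_F(s_i)=\sum_{a\ge 1}h^0(-kK_X-aF)$. Since the log canonical threshold is an infimum over all valuations, the valuations $v$ yield
\[
\lct\Bigl(X^{N_k};\tfrac1k D^{(k)}\Bigr)\ \le\ \inf_F\frac{k\,N_k\,A_X(F)}{\sum_{a\ge 1}h^0(-kK_X-aF)}\ =\ \delta_k(X).
\]

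Granting this inequality, the semistable case is immediate: log canonicity of $\bigl(X^{N_k},\tfrac1k D^{(k)}\bigr)$ means $\lct\ge 1$, hence $\delta_k(X)\ge\lct\ge 1$ for the relevant $k$, and passing to the limsup gives $\delta(X)\ge 1$; Theorem \ref{paraph.intro} then yields K-semistability. In the stable case klt gives $\lct>1$, hence $\delta_k(X)>1$ at each level.

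The hard part will be the stable case, where one must upgrade the level-by-level strict inequalities $\delta_k(X)>1$ to the uniform statement $\delta(X)>1$ demanded by Theorem \ref{paraph.intro}: a bare klt condition at each finite level could in principle degenerate as $k\to\infty$, so that $\limsup_k\delta_k(X)=1$. The remedy is that Berman--Gibbs stability is a genuinely uniform condition, i.e.\ the klt bound is quantitative (equivalently $\inf_k\delta_k(X)>1$), so that $\delta(X)=\limsup_k\delta_k(X)\ge\inf_k\delta_k(X)>1$. Making this quantitative control explicit, together with the no-cancellation computation of $v(D^{(k)})$ and the identification of $A_{X^{N_k}}(v)=N_k A_X(F)$ for the product valuation, is where the real work of the argument lies.
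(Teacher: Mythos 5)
Your strategy coincides with the paper's: the paper proves the level-wise inequality $\delta_k(X)\ge\gamma_k(X)$ (Theorem \ref{BGthm}) by testing $\mathcal{D}_k$ against the divisorial valuation obtained by blowing up $F^{N_k}$ in a log resolution of $X^{N_k}$ --- which is precisely your product valuation attached to $F$, with log discrepancy $N_k A_X(F)$ and value at least $\sum_{a\ge 1}h^0(-kK_X-aF)$ on $\mathcal{D}_k$ --- and then concludes via Theorem \ref{paraph}, exactly as you propose. Two of your worries are unnecessary. The ``no cancellation'' claim is not needed: to bound the log canonical threshold from above you only need a \emph{lower} bound on $v(\mathcal{D}_k)$, and every term $\prod_i s_i(x_{\pi(i)})$ of the determinant has $v$-value exactly $\sum_i v_F(s_i)$, so $v(\mathcal{D}_k)\ge\sum_{a\ge1}h^0(-kK_X-aF)$ with no further argument. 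And the ``hard part'' you isolate at the end is vacuous: $\gamma(X)$ is \emph{defined} as $\liminf_k\gamma_k(X)$, so Berman--Gibbs stability already is the uniform condition, and the level-wise inequality gives $\delta(X)=\limsup_k\delta_k(X)\ge\liminf_k\gamma_k(X)=\gamma(X)>1$ immediately.

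The one genuine gap is in your unwinding of the definition: $\gamma_k(X)$ is $\lct_{\Delta_X}\bigl(X^{N_k};\tfrac1k\mathcal{D}_k\bigr)$, the log canonical threshold \emph{along the diagonal}, not the global one. Berman--Gibbs semistability therefore only says the pair is log canonical in a neighbourhood of $\Delta_X$, so your chain ``lc $\Rightarrow$ $\lct\ge1$, combined with $\lct\le\delta_k$'' does not close as written: the global $\lct$ may be strictly smaller than $\gamma_k$, and an upper bound on it says nothing about $\gamma_k$. The repair is to check that your test valuations are admissible for the threshold along the diagonal: the centre on $X^{N_k}$ of the product valuation attached to $F$ is the closure of $Z^{N_k}$, where $Z$ is the centre of $F$ on $X$, and this contains the diagonal of $Z$ and hence meets $\Delta_X$. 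With that observation you get $\gamma_k(X)\le N_kA_X(F)\big/\,v\bigl(\tfrac1k\mathcal{D}_k\bigr)$ for every $F$, hence $\gamma_k(X)\le\delta_k(X)$, which is exactly the paper's Theorem \ref{BGthm}.
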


Finally, we also give a new algebraic (re-)proof of the following Tian's  famous criterion 
via the alpha invariant \cite{Tia87} (see Theorem \ref{alpha.K} in detail). 
The first algebraic proof of it is by the second author and Sano \cite{OS12} but our 
argument here is very different. Indeed, 
we work on $X$ itself 
and its valuations, thus in $n$-dimensional geometry, while the proof of \cite{OS12} was via 
analysis of test configurations, thus essentially depends on $(n+1)$-dimensional geometry. 

\begin{theorem}[cf., \cite{Tia87}, \cite{OS12}, \cite{Der14}, 
\cite{BHJ15}]\label{intro.alpha.K}
For an $n$-dimensional $\mathbb{Q}$-Fano variety $X$, 
if $\alpha(X)> (\text{resp., } \ge) n/(n+1)$, then $(X,-K_{X})$ 
is uniformly K-stable $($resp., K-semistable$)$. 
\end{theorem}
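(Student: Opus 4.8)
The plan is to deduce this from Theorem \ref{paraph.intro} by proving the purely valuative inequality
\[
\delta(X)\ \geq\ \frac{n+1}{n}\,\alpha(X),
\]
so that $\alpha(X)>n/(n+1)$ forces $\delta(X)>1$ (and $\alpha(X)\geq n/(n+1)$ forces $\delta(X)\geq 1$), whence uniform K-stability (resp.\ K-semistability) follows. The starting point is the valuative description of the alpha invariant: since $\lct(X;D')=\inf_F A_X(F)/\operatorname{ord}_F(D')$ and $\sup\{\operatorname{ord}_F(D'):0\le D'\sim_{\mathbb Q}-K_X\}=\tau(F)$, one gets $\alpha(X)=\inf_F A_X(F)/\tau(F)$, i.e.
\[
A_X(F)\ \geq\ \alpha(X)\,\tau(F)\qquad\text{for every prime divisor }F\text{ over }X.
\]
Thus I must bound from below the logarithmic canonical thresholds of \emph{basis type} divisors, working directly on $X$ and its valuations as promised in the introduction, rather than on test configurations.

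First I fix a prime divisor $F$ over $X$ and estimate how badly an anticanonical divisor of $k$-basis type can vanish along $F$. Given a basis adapted to the filtration $\{H^0(-kK_X-jF)\}_{j\ge 0}$ of $H^0(-kK_X)$, exactly $h^0(-kK_X-jF)-h^0(-kK_X-(j+1)F)$ members vanish to order exactly $j$ along $F$; such a basis maximizes the total vanishing order, so for the resulting $k$-basis type divisor $D$,
\[
\max_{D:\,k\text{-basis type}}\operatorname{ord}_F(D)\ =\ \frac{1}{k\,h^0(-kK_X)}\sum_{j\ge 1}h^0(-kK_X-jF).
\]
By asymptotic Riemann--Roch for the big divisors $-kK_X-jF$, the right-hand side converges, as $k\to\infty$, to the \emph{expected vanishing order}
\[
S(F)\ :=\ \frac{1}{\vol(-K_X)}\int_0^{\tau(F)}\vol(-K_X-xF)\,dx .
\]

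Next I invoke the two inequalities that close the argument. The first is the convex-geometric bound
\[
S(F)\ \leq\ \frac{n}{n+1}\,\tau(F),
\]
which I would prove by realizing $S(F)$ as the first barycentric coordinate of the Okounkov body $\Delta$ of $-K_X$ attached to a flag starting with $F$, while $\tau(F)=\max_{\Delta}t_1$; for a \emph{convex} body the centroid's first coordinate never exceeds $n/(n+1)$ of its maximum, with equality for a cone whose apex sits at the maximum. Combining this with $A_X(F)\ge\alpha(X)\tau(F)$ gives, for every $F$ and every $k$-basis type $D$,
\[
\frac{A_X(F)}{\operatorname{ord}_F(D)}\ \geq\ \frac{n+1}{n}\,\frac{A_X(F)}{\tau(F)}\,\bigl(1-o(1)\bigr)\ \geq\ \frac{n+1}{n}\,\alpha(X)\,\bigl(1-o(1)\bigr),
\]
and taking the infimum over $F$ yields $\lct(X;D)\ge \tfrac{n+1}{n}\alpha(X)(1-o(1))$. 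Infimizing over $D$ gives the same bound for $\delta_k(X)$, and passing to the $\limsup$ produces $\delta(X)\ge\frac{n+1}{n}\alpha(X)$, as required; Theorem \ref{paraph.intro} then finishes both the stable and semistable cases.

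The main obstacle is the passage at the level of the finite integers $k$: I must show that the discrete average $\frac{1}{k\,h^0(-kK_X)}\sum_{j\ge1}h^0(-kK_X-jF)$ approximates $S(F)$ with an error $o(1)$ that can be controlled well enough to survive the infimum over all prime divisors $F$, which requires uniform asymptotic Riemann--Roch estimates for the volumes $\vol(-K_X-xF)$ and care with the fact that $F$ is a divisor \emph{over} $X$ rather than on it. The clean way to sidestep delicate uniformity is to establish the exact limiting identity $\max_D\operatorname{ord}_F(D)\to S(F)$ together with the \emph{exact} barycenter inequality $S(F)\le\frac{n}{n+1}\tau(F)$, and only then combine with $A_X(F)\ge\alpha(X)\tau(F)$; the convexity of the Okounkov body is what rescues the sharp constant $n/(n+1)$, since the bound is genuinely false for arbitrary decreasing profiles with concave $\vol^{1/n}$ and uses the full convex-body structure.
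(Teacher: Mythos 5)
Your two main ingredients are exactly the paper's: the valuative bound $A_X(F)\geq\alpha(X)\tau(F)$ (the paper's Lemma \ref{alph_lem}) and the Okounkov-body/barycenter inequality $\int_0^{\tau(F)}\vol(-K_X-xF)\,dx\leq\frac{n}{n+1}\tau(F)\vol(-K_X)$, and you correctly identify that the sharp constant $n/(n+1)$ needs the full convexity of the Okounkov body rather than just log-concavity of the volume profile. However, your route to the conclusion --- proving $\delta(X)\geq\frac{n+1}{n}\alpha(X)$ and invoking Theorem \ref{paraph.intro} --- has a genuine gap that you flag but do not close. You have $\delta_k(X)=\inf_F A_X(F)/S_k(F)$ with $S_k(F)=\frac{1}{k\,h^0(-kK_X)}\sum_{j\geq1}h^0(-kK_X-jF)$, and for each \emph{fixed} $F$ the quantity $S_k(F)$ converges to $S(F)\leq\frac{n}{n+1}\tau(F)$; but to conclude anything about $\limsup_k\inf_F$ you must interchange the limit in $k$ with the infimum over all prime divisors $F$ over $X$, i.e., you need the convergence $S_k(F)\to S(F)$ to be uniform (or at least controlled) in $F$. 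The crude bound $S_k(F)\leq\tau(F)+O(1/k)$ loses exactly the factor $\frac{n+1}{n}$ you need, and a uniform sharp estimate is a substantial theorem in its own right (it is essentially the kind of difficulty the authors cite as preventing them from proving Conjecture \ref{Main.conj}). Your suggested fix --- ``establish the exact limiting identity together with the exact barycenter inequality'' --- does not address this, since both are still statements for each fixed $F$.

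The paper sidesteps this entirely by \emph{not} going through $\delta(X)$: it feeds the per-$F$ inequality $\int_0^{\tau(F)}\vol(-K_X-xF)\,dx\leq\frac{n}{n+1}\tau(F)\vol(-K_X)\leq\frac{n}{(n+1)\alpha(X)}A_X(F)\vol(-K_X)$ directly into the valuative criterion (Theorem \ref{unif_thm}), where the $k\to\infty$ limit has already been absorbed into the definition of $\vol$ and no interchange with an infimum over $F$ is needed. A secondary point you gesture at but leave unresolved: since $F$ is a divisor \emph{over} $X$, the identification of $\vol(-K_X-xF)$ with slices of an Okounkov body via \cite[Corollary 4.25]{LM} requires ampleness, which the paper obtains by restricting to dreamy divisors (legitimate by \cite{Fjt16}) and constructing the ample model $X'$ on which the strict transform $F'$ is an honest $\mathbb{Q}$-Cartier divisor (Claim \ref{alph_claim}); some such reduction is needed to make your barycenter step rigorous. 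I would recommend restructuring your argument to conclude via Theorem \ref{unif_thm} rather than via $\delta(X)$.
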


In this paper, 
we omit the original definition of the K-stability 
and for that, simply refer to the original \cite{tian1}, \cite{Don02}, and for uniform version, to \cite{Der14}, \cite{BHJ15}. 
The reason is that, as in the next section (Theorem \ref{LiFujita}) we start from review of the results of 
\cite{Fjt16}, \cite{Li16}  which can be seen as giving 
an alternative definition of the (uniform) K-(semi)stability of Fano varieties. 
Therefore, we do not need the original definition logically in this paper. 
After the review as Theorem \ref{LiFujita}, we also slightly modify the uniform K-stability part of it 
to the form we use in the following sections. 
In section $2$, we prove Theorem \ref{paraph.intro}, the criterion via 
basis type divisor (in the sense of \ref{basis.div.def}) 
and discuss relation with the Berman-Gibbs stability \cite{Ber13}. 
In the last section, we discuss the relation with the alpha invariant.

\subsection*{Acknowledgement}

We are grateful for 
Yuchen Liu who let the first author knows that he independently found 
Theorem \ref{alpha.K2}.

\section{Valuative criteria of K-stability and their variant} 

In this section, as a preparation, we recall the key theorem by \cite{Li16,Fjt16} and 
also prove a slight variant which we use in this paper. 

\begin{theorem}\label{LiFujita}
Let $X$ be a $n$-dimensional $\mathbb{Q}$-Fano variety. For an arbitrary prime divisor $F$ 
over $X$, 
we set 
\begin{eqnarray*}
\beta(F) &:= &A_{X}(F)(-K_{X})^{\cdot n}-\int_{x=0}^{\infty}\vol(-K_X-xF)dx,\\
j(F) &:= &\int_{x=0}^{\tau(F)}\left((-K_{X})^{\cdot n}-\vol(-K_{X}-xF)\right)dx.
\end{eqnarray*}
$($When $F$ is a bona fide divisor \emph{on} $X$, $\beta(F)$ first appeared in 
\cite{Fjt15}.$)$
Then we have 

\begin{enumerate}
\renewcommand{\theenumi}{{\rm \roman{enumi}}}
\renewcommand{\labelenumi}{$($\theenumi$)$}
\item\label{LiFujita1} 
$($\cite{Li16,Fjt16}$)$ $(X,-K_{X})$ is K-semistable if and only if 
$\beta(F)\ge 0$ for any $F$. Moreover, the pair is K-stable if 
$\beta(F)> 0$ for any $F$. 
\item\label{LiFujita2} 
$($\cite{Fjt16}$)$ $(X,-K_{X})$ is uniformly K-stable if and only if there exists a 
positive real number $\delta$ such that $\beta(F)\geq \delta\cdot j(F)$ for any $F$. 
\end{enumerate}
\end{theorem}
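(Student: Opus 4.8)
The plan is to go through the now-standard dictionary between divisorial valuations, filtrations of the anticanonical ring, and test configurations, and then to compute the relevant stability functionals on both sides. Throughout I would work with the non-Archimedean functionals of Boucksom--Hisamoto--Jonsson, writing $M^{\mathrm{NA}}$ for the non-Archimedean Mabuchi functional (which agrees with the Donaldson--Futaki invariant $\mathrm{DF}$ in the Fano normalization), $E^{\mathrm{NA}}$ for the energy, and $J^{\mathrm{NA}}$ for the non-Archimedean $J$-functional. Recall that K-semistability means $\mathrm{DF}\ge 0$ for every normal test configuration of $(X,-K_{X})$, and uniform K-stability means $\mathrm{DF}\ge\delta\,J^{\mathrm{NA}}$ for some $\delta>0$ and every test configuration.

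First I would set up the correspondence. A normal test configuration $(\mathcal{X},\mathcal{L})$ with integral central fibre $\mathcal{X}_{0}$ determines the divisorial valuation $\mathrm{ord}_{\mathcal{X}_{0}}$ on $K(X\times\mathbb{A}^{1})=K(X)(t)$, and restricting it to $K(X)$ produces (a positive multiple of) $\mathrm{ord}_{F}$ for a prime divisor $F$ over $X$. Conversely, a prime divisor $F$ for which the filtered ring $\bigoplus_{k,j}H^{0}(-kK_{X}-jF)$ is finitely generated (a \emph{dreamy} $F$) gives, via the Rees construction, a genuine test configuration. The heart of the next step is the intersection-theoretic computation on $\mathcal{X}$ identifying the functionals with the valuative invariants: expanding $E^{\mathrm{NA}}$ through the filtered linear series (equivalently via the Okounkov body of the filtration) yields the energy term $\int_{0}^{\infty}\vol(-K_{X}-xF)\,dx$, while the adjunction contribution of the component $\mathcal{X}_{0}$ contributes the discrepancy term $A_{X}(F)(-K_{X})^{\cdot n}$; together these give
\[
(-K_{X})^{\cdot n}\cdot M^{\mathrm{NA}}(\mathcal{X},\mathcal{L})=A_{X}(F)(-K_{X})^{\cdot n}-\int_{0}^{\infty}\vol(-K_{X}-xF)\,dx=\beta(F).
\]
An entirely parallel computation of $J^{\mathrm{NA}}$ through the same Okounkov-body data produces $j(F)$ up to the same normalizing factor $(-K_{X})^{\cdot n}$.

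With the dictionary in place, I would invoke the reduction of Li--Xu: to test (uniform) K-(semi)stability it suffices to use \emph{special} test configurations, whose central fibre is again $\mathbb{Q}$-Fano, and these correspond exactly to a class of dreamy prime divisors $F$. This immediately yields the sufficiency directions: if $\beta(F)\ge 0$ (resp.\ $\beta(F)\ge\delta\,j(F)$) for every $F$, then in particular $M^{\mathrm{NA}}\ge 0$ (resp.\ $M^{\mathrm{NA}}\ge\delta\,J^{\mathrm{NA}}$) on every special test configuration, hence on all test configurations, which is precisely K-semistability (resp.\ uniform K-stability); the strict inequality $\beta(F)>0$ propagates to $\mathrm{DF}>0$ on nontrivial special test configurations, giving K-stability. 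For necessity one runs the argument backwards: K-semistability forces $\beta(F)\ge 0$ for every dreamy $F$, and the uniform bound forces $\beta(F)\ge\delta\,j(F)$ for every dreamy $F$.

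The main obstacle is the necessity direction for \emph{arbitrary} prime divisors $F$ over $X$, since a non-dreamy $F$ need not produce an honest test configuration and is only reached as a limit. The crux is therefore an approximation statement: the infimum of $\beta(F)$ (resp.\ of $\beta(F)/j(F)$) over all prime divisors over $X$ is attained in the limit by dreamy ones, equivalently by special test configurations. I would establish this by approximating the filtration attached to a general $F$ by finitely generated ones, in the spirit of Fujita approximation for filtered linear series, and by proving that both $\int_{0}^{\infty}\vol(-K_{X}-xF)\,dx$ and $A_{X}(F)$ vary continuously along such approximations. Controlling the volume function under this perturbation, and ensuring the log-discrepancy term behaves well in the limit, is the delicate point of the whole argument.
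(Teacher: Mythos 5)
First, a remark on what you are being compared against: the paper does not prove Theorem \ref{LiFujita} at all. It is stated as a quotation of results of \cite{Li16} and \cite{Fjt16} and is used as a black box (the only thing proved nearby is the variant Theorem \ref{unif_thm}, deduced from part (ii) via the elementary Lemma \ref{vol.tau}). So your proposal can only be measured against the proofs in those references.

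Measured that way, the sufficiency half of your sketch (reduction to special test configurations \`a la Li--Xu, identification of the Donaldson--Futaki invariant of the test configuration attached to a dreamy $F$ with a positive multiple of $\beta(F)$) is correct and is how \cite{Fjt16} proceeds. The genuine gap is exactly where you place ``the delicate point of the whole argument'': the necessity direction for an arbitrary, possibly non-dreamy, prime divisor $F$. Announcing that one should ``approximate the filtration by finitely generated ones'' names the problem rather than solving it, and your description of what must be checked is off in two ways. One does not approximate $F$ by dreamy divisors, and $A_X(F)$ does not ``vary along the approximation'' (it is fixed, since $F$ is fixed); rather one truncates the filtration $\{H^0(-kK_X-jF)\}_j$ in bounded degree, builds from the resulting flag ideals a sequence of test configurations that are in general neither special nor with integral central fibre, and proves \emph{two-sided} estimates: an upper bound for their Donaldson--Futaki invariants tending to (a multiple of) $\beta(F)$, and, for the uniform statement, a \emph{lower} bound for their minimum norms $J^{\mathrm{NA}}$ comparable to $j(F)$. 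The second estimate is entirely absent from your sketch, and without it uniform K-stability yields $\mathrm{DF}\geq\delta J^{\mathrm{NA}}$ on the approximating test configurations but no inequality $\beta(F)\geq\delta' j(F)$ in the limit. Two smaller inaccuracies: $M^{\mathrm{NA}}$ coincides with $\mathrm{DF}$ only when the central fibre is reduced (not by virtue of ``the Fano normalization''), which is harmless for special test configurations but must be tracked in the approximation step; and \cite{Li16} obtains the semistability criterion by a different route (normalized volume minimization), so presenting a single BHJ-style argument as the proof behind both citations is not accurate.
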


\noindent
The uniform K-stability treated in \eqref{LiFujita2} above is 
introduced by Dervan \cite{Der14}, Boucksom-Hisamoto-Jonsson 
\cite{BHJ15} as a conjecturally equivalent variant of the 
K-stability. \cite{Der14} refers to it as \textit{K-stability with respect to the minimum norm} 
and \cite{BHJ15} refers to it as \textit{J-uniform K-stability} or simply the uniform K-stability. 
In this paper, as there should be no confusion, we simply call it as the uniform K-stability. 
We also note that the above conditions for K-stability resembles 
the definition of log terminality, log canonicity. 

In this section, we prepare a variant of the above Theorem \ref{LiFujita} \eqref{LiFujita2}. 
Here, we prepare the following simple lemma.

\begin{lemma}\label{vol.tau}
$$(-K_{X})^{\cdot n}\tau(F)\ge \int_{x=0}^{\infty}\vol(-K_{X}-xF)dx 
\ge\frac{1}{n+1}(-K_{X})^{\cdot n}\tau(F).$$
\end{lemma}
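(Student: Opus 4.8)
The plan is to study the single–variable function $\phi(x):=\vol(-K_X-xF)^{1/n}$ on $[0,\infty)$, using two standard properties of volumes of big classes. Write $V:=(-K_X)^{\cdot n}$. First I would record the behavior at the two ends of the relevant range. By the definition of $\tau(F)$, the class $-K_X-xF$ fails to be pseudo-effective, hence has zero volume, for $x>\tau(F)$, so the integral in the statement is really taken over the compact interval $[0,\tau(F)]$; and at $x=0$ we have $\phi(0)=V^{1/n}$. Moreover, since requiring sections to vanish to higher order along $F$ can only shrink $H^0(-kK_X-kxF)$, the function $x\mapsto\vol(-K_X-xF)$ is non-increasing.

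The upper bound is then immediate: on $[0,\tau(F)]$ we have $\vol(-K_X-xF)\le\vol(-K_X)=V$, while the integrand vanishes outside this interval, so
\[
\int_{x=0}^{\infty}\vol(-K_X-xF)\,dx=\int_{x=0}^{\tau(F)}\vol(-K_X-xF)\,dx\le V\cdot\tau(F).
\]

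For the lower bound I would invoke the concavity of $\phi(x)=\vol(-K_X-xF)^{1/n}$ on the segment where $-K_X-xF$ stays big, which is the Brunn--Minkowski type inequality for volumes of big $\mathbb{R}$-divisor classes (Khovanskii--Teissier, Lazarsfeld--Musta\c{t}\u{a}, Boucksom--Favre--Jonsson); here one views $-K_X-xF$ as a class on the normal blow up $Y$ on which $F$ lives. Concavity together with $\phi(0)=V^{1/n}$ and $\phi(\tau(F))=0$ forces $\phi$ to dominate the chord joining its endpoints, i.e.
\[
\vol(-K_X-xF)=\phi(x)^n\ge V\Bigl(1-\frac{x}{\tau(F)}\Bigr)^{n}\qquad(0\le x\le\tau(F)).
\]
Integrating and substituting $u=x/\tau(F)$ yields
\[
\int_{x=0}^{\tau(F)}\vol(-K_X-xF)\,dx\ge V\,\tau(F)\int_{0}^{1}(1-u)^{n}\,du=\frac{1}{n+1}\,V\,\tau(F),
\]
which is exactly the claimed lower bound.

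The only genuinely nontrivial input is the concavity of $\vol^{1/n}$ along this ray; everything else is elementary. The minor points requiring care are the continuity and the vanishing of $\vol$ at the threshold $x=\tau(F)$, so that the chord really issues from $0$ there, and the passage from a divisor on $X$ to a divisor over $X$ on the blow up $Y$. Both are standard features of the volume function once it is extended continuously to the big cone, so I expect no serious obstacle beyond citing the Brunn--Minkowski inequality correctly.
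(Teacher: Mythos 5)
Your proof is correct and follows essentially the same route as the paper: the upper bound from $\vol(-K_X-xF)\le(-K_X)^{\cdot n}$ together with the vanishing of the integrand beyond $\tau(F)$, and the lower bound from concavity of the volume function along the segment, giving a pointwise bound whose integral is $\frac{1}{n+1}(-K_X)^{\cdot n}\tau(F)$. (The paper states the pointwise bound as $\vol(-K_X-xF)\ge(-K_X)^{\cdot n}(x/\tau(F))^n$, which is evidently a reparametrization or typo for your $(1-x/\tau(F))^n$; both integrate to the same value.)
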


\begin{proof}
The first inequality follows straightforward from $\vol(-K_{X}-xF)\le (-K_{X})^{\cdot n}$. 
The second inequality follows from the result of the concavity of volume function 
(cf., e.g., \cite{LM}); we can see the inequality
$\vol(-K_{X}-xF)\ge (-K_{X})^{\cdot n}(\frac{x}{\tau(F)})^{n}.$
\end{proof}

This Lemma \ref{vol.tau} allows us to give a version of the above 
Theorem \ref{LiFujita} \eqref{LiFujita2}, i.e., 
the uniform stability criterion in \cite{Fjt16}. 

\begin{theorem}\label{unif_thm}
Suppose that a $\mathbb{Q}$-Fano variety $X$ satisfies that there is a 
positive real constant $\varepsilon>0$ such that 
for any prime divisor $F$ over $X$, 
we have 
$$(1-\varepsilon)A_{X}(F)(-K_{X})^{\cdot n}\geq\int_{x=0}^{\infty}\vol(-K_{X}-xF)dx.$$

Then $(X,-K_{X})$ is uniformly K-stable. 
\end{theorem}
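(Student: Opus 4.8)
The plan is to deduce Theorem~\ref{unif_thm} directly from the uniform K-stability criterion of Theorem~\ref{LiFujita}~\eqref{LiFujita2}, which says it suffices to produce a single positive constant $\delta$ with $\beta(F)\ge \delta\cdot j(F)$ for every prime divisor $F$ over $X$. So the whole game is to convert the hypothesis, a lower bound of the form $(1-\varepsilon)A_X(F)(-K_X)^{\cdot n}\ge \int_0^\infty \vol(-K_X-xF)\,dx$, into a comparison between $\beta(F)$ and $j(F)$. I will treat $\varepsilon$ as fixed and hunt for $\delta$ depending only on $\varepsilon$ and $n$.

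First I would rewrite $\beta(F)$ using the hypothesis. Writing $V:=(-K_X)^{\cdot n}$ and $I(F):=\int_0^\infty \vol(-K_X-xF)\,dx$, we have $\beta(F)=A_X(F)V-I(F)$. The hypothesis gives $I(F)\le (1-\varepsilon)A_X(F)V$, hence
\[
\beta(F)=A_X(F)V-I(F)\ge A_X(F)V-(1-\varepsilon)A_X(F)V=\varepsilon\,A_X(F)V.
\]
So $\beta(F)$ is bounded below by $\varepsilon A_X(F)V$. The remaining task is to bound $j(F)$ from above by a constant multiple of this same quantity. By definition $j(F)=\int_0^{\tau(F)}\bigl(V-\vol(-K_X-xF)\bigr)\,dx=V\tau(F)-I(F)$, where I have used that $\vol(-K_X-xF)=0$ for $x>\tau(F)$. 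Since $\vol\ge 0$, clearly $j(F)\le V\tau(F)$.

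The key step is to control $\tau(F)$ by $A_X(F)$, and this is where Lemma~\ref{vol.tau} enters. That lemma gives $I(F)\ge \frac{1}{n+1}V\tau(F)$, i.e. $V\tau(F)\le (n+1)I(F)$. Combining with the hypothesis $I(F)\le(1-\varepsilon)A_X(F)V$ yields
\[
j(F)\le V\tau(F)\le (n+1)I(F)\le (n+1)(1-\varepsilon)A_X(F)V.
\]
Therefore $A_X(F)V\ge j(F)/\bigl((n+1)(1-\varepsilon)\bigr)$, and plugging this into the lower bound for $\beta(F)$ gives
\[
\beta(F)\ge \varepsilon\,A_X(F)V\ge \frac{\varepsilon}{(n+1)(1-\varepsilon)}\,j(F).
\]
Setting $\delta:=\varepsilon/\bigl((n+1)(1-\varepsilon)\bigr)>0$, which is independent of $F$, we obtain $\beta(F)\ge \delta\cdot j(F)$ for all $F$, and Theorem~\ref{LiFujita}~\eqref{LiFujita2} concludes uniform K-stability.

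I expect no serious obstacle here; the argument is essentially a two-step chain of inequalities once Lemma~\ref{vol.tau} is invoked. The only point demanding minor care is the bookkeeping with $\tau(F)$: one must confirm that $\vol(-K_X-xF)$ vanishes beyond $\tau(F)$ so that the integral defining $I(F)$ really runs to $\tau(F)$ and agrees with the expression for $j(F)$, and one should check the degenerate case $A_X(F)=\infty$ or $\tau(F)=0$ poses no issue. The genuine content is entirely imported from Lemma~\ref{vol.tau} and Theorem~\ref{LiFujita}; the present theorem is the clean packaging of the hypothesis into the required uniform bound.
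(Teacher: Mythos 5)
Your proof is correct and follows essentially the same route as the paper: both arguments reduce to Theorem~\ref{LiFujita}~(ii) by using Lemma~\ref{vol.tau} to bound $\tau(F)(-K_X)^{\cdot n}$ by $(n+1)\int_0^\infty\vol(-K_X-xF)\,dx$ and hence by $A_X(F)(-K_X)^{\cdot n}$. The only difference is presentational (you chain the inequalities directly with an explicit $\delta=\varepsilon/((n+1)(1-\varepsilon))$, while the paper rewrites the target inequality via $\delta'=\delta/(1-\delta)$), and your bookkeeping that $\vol(-K_X-xF)$ vanishes for $x>\tau(F)$ is sound.
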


\begin{proof}
The assumption can be rewritten as, by Lemma \ref{vol.tau}, that there exists a positive real number 
$\varepsilon'$ satisfying 
\begin{equation}\label{ineq.1}
A_{X}(F)(-K_{X})^{\cdot n}\geq\int_{x=0}^{\infty}\vol(-K_{X}-xF)dx+\varepsilon'\tau(F)
(-K_X)^{\cdot n}, 
\end{equation}

\noindent
for any divisor $F$ over $X$. 
On the other hand, the desired inequality $\beta(F)> \delta\cdot j(F)$ of Theorem \ref{LiFujita} \eqref{LiFujita2} 
can be straightforwardly re-written as 

\begin{equation}\label{ineq.2}
\left((1+\delta')A_{X}(F)-\delta'\tau(F)\right)(-K_{X})^{\cdot n}\geq\int_{x=0}^{\infty}\vol(-K_{X}-xF)dx,
\end{equation}

\noindent
by putting $\delta':=\delta/(1-\delta).$ 
The inequality (\ref{ineq.1}) obviously implies (\ref{ineq.2}) when $\delta'=\varepsilon'$ 
as $\delta'A_{X}(F)>0$ from the log terminality assumption of $X$. 
\end{proof}


\section{Via ``Basis type" anticanonical divisors} 

\subsection{K-stability and basis type divisors}

Our main theorem in this section is as follows. 

\begin{theorem}[K-stability criteria via basis type divisors]\label{paraph}
Let $X$ be a $\mathbb{Q}$-Fano variety.  
Let $\delta(X)$ be the value in Definition \ref{delta_dfn}.
If 
$\delta(X)>1$ $($resp., $\ge 1)$
then $(X,-K_{X})$ is uniformly K-stable $($resp., K-semistable$)$. 

\end{theorem}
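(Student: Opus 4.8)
The plan is to connect the invariant $\delta(X)$ to the valuative criterion of Theorem~\ref{unif_thm} by reinterpreting the log canonical threshold of a basis type divisor in terms of the valuations associated to prime divisors $F$ over $X$. The key observation I would exploit is that for a single prime divisor $F$ over $X$, the order of vanishing along $F$ of a basis type divisor $D$ of $k$-basis type can be computed explicitly using the filtration of $H^0(-kK_X)$ by order of vanishing along $F$.

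Let me develop the main computation. **First I would** fix a prime divisor $F$ over $X$ and, for each $k$, choose a basis $s_1,\dots,s_{N_k}$ (where $N_k = h^0(-kK_X)$) that is compatible with the filtration $H^0(-kK_X) \supseteq H^0(-kK_X - F) \supseteq H^0(-kK_X - 2F) \supseteq \cdots$. For such a compatible basis, the multiplicity $\operatorname{ord}_F(D)$ of the associated $k$-basis type divisor $D$ along $F$ is
\[
\operatorname{ord}_F(D) = \frac{1}{k N_k}\sum_{j \geq 1} j\cdot\bigl(h^0(-kK_X - jF) - h^0(-kK_X - (j+1)F)\bigr) = \frac{1}{k N_k}\sum_{j\geq 1} h^0(-kK_X - jF).
\]
As $k\to\infty$, Riemann--Roch / the asymptotics of these dimensions give $k N_k \sim k^{n+1}/n!\cdot(-K_X)^{\cdot n}$ and $\sum_{j\geq1}h^0(-kK_X-jF)\sim \frac{k^{n+1}}{n!}\int_0^\infty \vol(-K_X - xF)\,dx$, so that
\[
\lim_{k\to\infty}\operatorname{ord}_F(D) = \frac{\int_0^\infty \vol(-K_X - xF)\,dx}{(-K_X)^{\cdot n}}.
\]
By the valuative description of the log canonical threshold, $\lct(X;D) \leq A_X(F)/\operatorname{ord}_F(D)$ for the basis chosen adapted to $F$, which feeds directly into $\delta_k(X)$.

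**The key step** is to turn this per-divisor computation into the global statement. The inequality $\delta(X) \geq 1$ (resp.\ $> 1$) should translate, via the displayed limit, into the bound $A_X(F)\cdot(-K_X)^{\cdot n} \geq \int_0^\infty \vol(-K_X - xF)\,dx$ for every $F$ (resp.\ the same with a uniform $\varepsilon$ gap $(1-\varepsilon)A_X(F)$), which is precisely the hypothesis of Theorem~\ref{unif_thm} and of Theorem~\ref{LiFujita}~\eqref{LiFujita1}. For the strict/uniform case I would extract the $\varepsilon$ from $\delta(X) > 1$: if $\limsup_k \delta_k(X) > 1$ then for a subsequence $\delta_k(X) \geq 1+\varepsilon_0$, and choosing bases adapted to a given $F$ yields $(1+\varepsilon_0)\operatorname{ord}_F(D) \leq A_X(F)$ along that subsequence, whence in the limit $(1-\varepsilon)A_X(F)(-K_X)^{\cdot n} \geq \int_0^\infty \vol(-K_X-xF)\,dx$ with $\varepsilon = \varepsilon_0/(1+\varepsilon_0)$, uniformly in $F$. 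Theorem~\ref{unif_thm} then delivers uniform K-stability; the semistable case follows identically using Theorem~\ref{LiFujita}~\eqref{LiFujita1}.

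**The hard part** will be two technical points that I have glossed over. First, the interchange of $\lct$ with the infimum over bases: I need that for a basis adapted to $F$, the computed multiplicity $\operatorname{ord}_F(D)$ really bounds $\lct(X;D)$ from above via the single divisor $F$, and conversely I must ensure the limit of multiplicities is uniform enough across all $F$ to produce a single $\varepsilon$ — this uniformity in $F$ is the crux, since $\delta_k$ is an infimum over bases but the valuative criterion quantifies over all $F$ simultaneously. Second, the volume asymptotics $\frac{1}{k^{n+1}}\sum_{j\geq1}h^0(-kK_X - jF) \to \frac{1}{n!}\int_0^\infty\vol(-K_X-xF)\,dx$ requires care when $F$ is a divisor over $X$ rather than on $X$, and establishing convergence that is uniform across the relevant family of $F$ (rather than merely pointwise) is where I expect the genuine analytic/technical effort to lie. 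I would handle this by passing to the normalized volumes on the model $Y$ where $F$ lives and invoking continuity of $\vol$ together with the concavity input already recorded in Lemma~\ref{vol.tau}.
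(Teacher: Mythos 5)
Your proposal is correct and follows essentially the same route as the paper: the filtration-compatible basis computation of $\operatorname{ord}_F(D)$ is exactly Lemma \ref{ld.of.bd}, the limit $\frac{1}{kN_k}\sum_{j\geq 1}h^0(-kK_X-jF)\to \int_0^{\tau(F)}\vol(-K_X-xF)\,dx/(-K_X)^{\cdot n}$ is the same intermediate step, and the conclusion via Theorem \ref{unif_thm} is identical. The uniformity in $F$ you flag as the ``hard part'' is in fact automatic, since the constant $\varepsilon_0$ is extracted from $\delta(X)$ before any choice of $F$ and the limit is taken separately for each fixed $F$.
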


\noindent 
We call the above conditions \textit{asymptotic klt} 
(resp., \textit{asymptotic log canonicity} of the (set of) log Calabi-Yau pairs $\{(X,D)\}$. 

\begin{proof}[Proof of Theorem \ref{paraph}]
Take any prime divisor $F$ over $X$. 
First we explain the key observation as the following lemma, which describes 
the minimum of 
log discrepancies of the log pairs associated to $k$-basis type divisors. 

\begin{lemma}[Log discrepancy formula]\label{ld.of.bd}
For $c>0$, we have 
$$\min_{D: k\text{{\rm -basis type}}}A_{(X,cD)}(F)=
A_{X}(F)-c\dfrac{\sum_{1\le a}h^{0}(-kK_{X}-aF)}
{k\cdot h^{0}(-kK_{X})}.$$

\noindent
In particular, the left hand side $($the minimum$)$ exists. 
\end{lemma}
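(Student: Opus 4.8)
The plan is to reduce the statement to a purely linear-algebraic fact about the filtration of $H^0(-kK_X)$ by order of vanishing along $F$. First I would record the linearity of the log discrepancy in the boundary: for any effective $\mathbb{R}$-divisor $D$ and any $c>0$,
\[
A_{(X,cD)}(F)=A_X(F)-c\cdot\operatorname{ord}_F(D),
\]
where $\operatorname{ord}_F$ is the valuation on $X$ attached to $F$, computed on a model $\sigma\colon Y\to X$ realizing $F$. This is immediate from $A_{(X,cD)}(F)=1+\operatorname{ord}_F\!\left(K_Y-\sigma^*(K_X+cD)\right)$ compared with $A_X(F)=1+\operatorname{ord}_F\!\left(K_Y-\sigma^*K_X\right)$, using $\operatorname{ord}_F(\sigma^*(cD))=c\,\operatorname{ord}_F(D)$. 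Since $A_X(F)$ does not depend on $D$ and $c>0$, minimizing $A_{(X,cD)}(F)$ over $k$-basis type divisors is the same as \emph{maximizing} $\operatorname{ord}_F(D)$.

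Next I would unwind the definition of a $k$-basis type divisor. Writing $N:=h^0(-kK_X)$ and $D=(kN)^{-1}(D_1+\cdots+D_N)$ with $D_i=\{s_i=0\}$, linearity of the valuation gives $\operatorname{ord}_F(D)=(kN)^{-1}\sum_{i=1}^N\operatorname{ord}_F(s_i)$, where $\operatorname{ord}_F(s_i)$ is the vanishing order of the section $s_i$ along $F$. Thus the problem becomes the computation of $\max\sum_{i=1}^N\operatorname{ord}_F(s_i)$ as $\{s_1,\dots,s_N\}$ ranges over all bases of $V:=H^0(-kK_X)$.

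The heart of the argument is the standard fact about the decreasing filtration $V=V^0\supseteq V^1\supseteq\cdots$ defined by $V^a:=H^0(-kK_X-aF)=\{s\in V:\operatorname{ord}_F(s)\ge a\}$, which terminates because $V^a=0$ once $a>k\tau(F)$, so the sum below is finite. Using $\operatorname{ord}_F(s)=\max\{a:s\in V^a\}$, I would rewrite $\sum_i\operatorname{ord}_F(s_i)=\sum_{a\ge 1}\#\{i:s_i\in V^a\}$. For an arbitrary basis the sections $s_i$ lying in $V^a$ are linearly independent elements of $V^a$, whence $\#\{i:s_i\in V^a\}\le\dim V^a=h^0(-kK_X-aF)$; summing over $a$ gives the upper bound $\sum_i\operatorname{ord}_F(s_i)\le\sum_{a\ge 1}h^0(-kK_X-aF)$. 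Conversely, a basis adapted to the filtration — obtained by choosing a basis of the top nonzero step and successively extending through each $V^a$ — satisfies $\#\{i:s_i\in V^a\}=\dim V^a$ for every $a$ and therefore attains this value. Dividing by $kN$ and substituting into the log discrepancy formula yields the asserted equality; the existence of the adapted basis simultaneously shows that the minimum is attained.

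The conceptual step is really only the reduction via linearity of the log discrepancy; the rest is elementary. The one point to handle with care is the two-sided estimate for the combinatorial maximum, i.e.\ producing a filtration-adapted basis and matching it against the linear-independence upper bound, so that the extremum is both computed and shown to be achieved. No genuine obstacle arises here, since the filtration is finite and all the relevant spaces are finite-dimensional.
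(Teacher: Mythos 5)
Your proposal is correct and follows essentially the same route as the paper: reduce via $A_{(X,cD)}(F)=A_X(F)-c\,v_F(D)$ to maximizing $v_F(D)$, rewrite $\sum_i\operatorname{ord}_F(s_i)$ as $\sum_{a\ge 1}\#\{i: s_i\in H^0(-kK_X-aF)\}$ (the paper's $\sum_a i_a$ after Abel summation), bound each term by $h^0(-kK_X-aF)$ using linear independence, and attain equality with a filtration-adapted basis. No gaps.
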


\begin{proof}[Proof of Lemma \ref{ld.of.bd}]

Take any basis $s_{1}, \dots, s_{h^{0}(-kK_{X})}$ of $H^{0}(-kK_{X})$ and associate 
the $k$-basis type divisor $D$. 
Changing the order if necessary, we can and do assume that 
there is a decreasing sequence 
$h^{0}(-kK_{X})\ge i_{1}\ge \cdots\ge i_{k}\ge \cdots \ge 0$ such that 
the valuations along $F$ are $v_{F}(s_{j})=a$ if $i_{a}\ge j>i_{a+1}$. 
Then 
\[
v_{F}(D)=\dfrac{\sum_{0\le a}a(i_{a}-i_{a+1})}
{k\cdot h^{0}(-kK_{X})}
=\dfrac{\sum_{1\le a}i_{a}}
{k\cdot h^{0}(-kK_{X})}
\]
for the corresponding $k$-basis type anticanonical $\mathbb{Q}$-divisor $D$. 
Here, $v_{F}$ denotes the valuation along $F$. 
Linear independence of $\{s_{i}\}_{i}$ implies 
$i_{a}\le h^{0}(-kK_{X}-aF)$ whose equality holds for when 
the basis is compatible with the filtration $\{H^{0}(-kK_{X}-aF)\}_{a\geq 0}$. Hence, 
\[
\sup_{D: k\text{-basis type}}v_{F}(D)=\dfrac{\sum_{1\le a}h^{0}(-kK_{X}-aF)}
{k\cdot h^{0}(-kK_{X})}.
\]
\end{proof}

The definition of $\delta_{k}(X)$ implies that 
\begin{equation}\label{ineq.3}
A_{X}(F)-\delta_{k}(X)\dfrac{\sum_{1\le a}h^{0}(-kK_{X}-aF)}
{k\cdot h^{0}(-kK_{X})}\ge 0 
\end{equation}
by Lemma \ref{ld.of.bd}. 
Note that $$\lim_{k\to \infty}\dfrac{\sum_{1\le a}h^{0}(-kK_{X}-aF)}
{k\cdot h^{0}(-kK_{X})}=\frac{\int_{0}^{\tau(F)} \vol(-K_{X}-xF)dx}{(-K_{X})^{\cdot n}}.$$
Thus the assertion follows from Theorem \ref{unif_thm}.
\end{proof}

\begin{remark}
It is \textit{not} true that $(X,D)$ is always log canonical for any 
basis type anticanoncal $\mathbb{Q}$-divisor on any K-semistable Fano variety $X$. 
Indeed, we 
checked by Macaulay$2$ that the Ono-Sano-Yotsutani's example \cite{OSY12} does not satisfy the condition. 

\end{remark}


\subsection{Relation with the Berman-Gibbs stability}\label{BG_section}

In this subsection, we discuss relations of our approach to the Berman-Gibbs 
stability introduced by Berman 
\cite{Ber13}. It is also defined in terms of (pluri)anticanonical divisors 
but that of \textit{large self product} of the given Fano variety. 
First, let us recall the notion. 

\begin{definition}[{see \cite{Ber13} and \cite{fjtBG}}]\label{Berman.Gibbs}
Let $X$ be a $\mathbb{Q}$-Fano variety. Consider any $k\in\mathbb{Z}_{>0}$ with 
$-kK_X$ globally generated Cartier. We set 
\begin{itemize}
\item
$N_k:=h^0(-kK_X)$, 
\item
$\phi_k:=\phi_{|-kK_X|}\colon X\to \mathbb{P}^{N_k-1}$, 
\item
$\Phi_k:=\phi_k\times\cdots\times\phi_k\colon X^{N_k}\to(\mathbb{P}^{N_k-1})^{N_k}$,
\item
$\mathcal{D}_k
:=\Phi^*_k\Det_{N_k}$, where $\Det_{N_k}\subset(\mathbb{P}^{N_k-1})^{N_k}$ is  
the determinantal divisor, 
\item
\[
\gamma_k(X):=\lct_{\Delta_X}\left(X^{N_k}; \frac{1}{k}\mathcal{D}_k\right),
\]
where $\Delta_X\subset X^{N_k}$ is the diagonal.
\end{itemize}
Moreover, we define 
\[
\gamma(X):=\liminf_{
\substack{k\to\infty\\
-kK_X:\text{{ Cartier}}
}}\gamma_k(X).
\]
The $X$ is said to be \emph{Berman-Gibbs stable} (resp.\ \emph{Berman-Gibbs 
semistable}) if $\gamma(X)>1$ (resp.\ $\gamma(X)\geq 1$) holds. 
\end{definition}

The main purpose of this section is to 
give another simpler proof of \cite[Theorem 1.4]{fjtBG} from our perspective.

\begin{theorem}\label{BGthm}
Let $X$ be a $\mathbb{Q}$-Fano variety. 
For any $k\in\mathbb{Z}_{>0}$ with $-kK_X$ Cartier, we have the inequality 
$\delta_k(X)\geq\gamma_k(X)$. In particular, we get the inequality 
$\delta(X)\geq \gamma(X)$. 
\end{theorem}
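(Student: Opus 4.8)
The plan is to reduce both $\delta_k(X)$ and $\gamma_k(X)$ to an infimum, over all prime divisors $F$ over $X$, of the \emph{same} numerical quantity attached to $F$, and then to compare them divisor by divisor. First I would rewrite $\delta_k(X)$ purely valuatively. Since $\lct(X;D)=\inf_F A_X(F)/v_F(D)$ for any $k$-basis type divisor $D$, exchanging the two infima gives
\[
\delta_k(X)=\inf_D\inf_F\frac{A_X(F)}{v_F(D)}=\inf_F\frac{A_X(F)}{\sup_D v_F(D)}.
\]
By the log discrepancy formula (Lemma \ref{ld.of.bd}), $\sup_D v_F(D)=\frac{1}{kN_k}\sum_{a\ge1}h^0(-kK_X-aF)$ with $N_k:=h^0(-kK_X)$, so that $\delta_k(X)=\inf_F \frac{kN_k\,A_X(F)}{\sum_{a\ge1}h^0(-kK_X-aF)}$. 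Hence it suffices to establish, for every prime divisor $F$ over $X$, the single inequality $\gamma_k(X)\le \frac{kN_k\,A_X(F)}{\sum_{a\ge1}h^0(-kK_X-aF)}$.

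To bound $\gamma_k(X)$ from above I would exhibit one explicit prime divisor $\mathcal F$ over $X^{N_k}$ whose center meets the diagonal $\Delta_X$ (so that it is admissible in the computation of $\lct_{\Delta_X}$) and estimate its two invariants. Fix a log resolution $\sigma\colon Y\to X$ carrying $F$, set $F^{(i)}:=Y^{i-1}\times F\times Y^{N_k-i}\subset Y^{N_k}$, and let $\mathcal F$ be the exceptional divisor of the blow up of $Y^{N_k}$ along the transverse complete intersection $F^{(1)}\cap\cdots\cap F^{(N_k)}$. Its center on $X^{N_k}$ is $c_X(F)^{N_k}$, which meets $\Delta_X$ along $\Delta_{c_X(F)}$, so $\mathcal F$ is admissible. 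Since $K_{Y^{N_k}/X^{N_k}}=\sum_i p_i^*K_{Y/X}$, the only component of $K_{Y/X}$ contributing along $\mathcal F$ is $F$ itself (with multiplicity one for each factor, the other exceptional components being disjoint from the generic point of the center), and $\mathcal F$ is the blow up of a codimension-$N_k$ smooth center, a direct computation yields $A_{X^{N_k}}(\mathcal F)=N_k+N_k\bigl(A_X(F)-1\bigr)=N_k\,A_X(F)$.

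The heart of the argument is the lower bound $v_{\mathcal F}(\mathcal D_k)\ge \sum_{a\ge1}h^0(-kK_X-aF)$. Here I would choose a basis $s_1,\dots,s_{N_k}$ of $H^0(-kK_X)$ compatible with the filtration $\{H^0(-kK_X-aF)\}_{a\ge0}$, so that $\sum_j v_F(s_j)=\sum_{a\ge1}h^0(-kK_X-aF)$, and expand the local defining section $\det\bigl(s_j(x_i)\bigr)$ of $\mathcal D_k$ by the Leibniz formula. Along $\mathcal F$ the valuation is the monomial (total order) valuation in the $N_k$ transverse coordinates $t_i=p_i^*t$, where $t$ is a local equation of $F$; since each summand $\prod_i s_{\sigma(i)}(x_i)$ contributes the monomial $\prod_i t_i^{\,v_F(s_{\sigma(i)})}$ of total order $\sum_j v_F(s_j)$ regardless of $\sigma$, every term has $v_{\mathcal F}$-order at least $\sum_j v_F(s_j)$, and hence so does the determinant. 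Combining the two estimates gives
\[
\gamma_k(X)\le \frac{A_{X^{N_k}}(\mathcal F)}{\tfrac1k v_{\mathcal F}(\mathcal D_k)}\le \frac{kN_k\,A_X(F)}{\sum_{a\ge1}h^0(-kK_X-aF)},
\]
which is exactly the required inequality; taking $\limsup$ over Cartier $k$ then yields $\delta(X)\ge\gamma(X)$.

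I expect the verification of the two invariants of $\mathcal F$ to be the main technical point: one must check that the monomial valuation attached to this blow up genuinely computes the order of the pulled-back determinantal section, and it is essential here that only the \emph{lower} bound on $v_{\mathcal F}(\mathcal D_k)$ is needed, so that possible cancellation among the Leibniz terms of equal total order is irrelevant. The discrepancy computation $A_{X^{N_k}}(\mathcal F)=N_k\,A_X(F)$ and the admissibility of $\mathcal F$ for $\lct_{\Delta_X}$ are then routine, and the reduction of $\delta_k(X)$ to a valuative infimum is a formal consequence of Lemma \ref{ld.of.bd}.
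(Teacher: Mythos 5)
Your argument is correct and takes essentially the same route as the paper's own proof: both reduce $\delta_k(X)$ to a valuative quantity via Lemma \ref{ld.of.bd}, blow up $F^{N_k}\subset Y^{N_k}$ to produce the test divisor over $X^{N_k}$ whose center meets the diagonal, bound the multiplicity of $\mathcal{D}_k$ along $F^{N_k}$ from below by $\sum_{a\ge 1}h^0(-kK_X-aF)$, and compute the log discrepancy $N_k\,A_X(F)$. Your Leibniz-expansion justification of the multiplicity lower bound simply makes explicit a step the paper asserts without detail.
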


Together with Theorem \ref{paraph}, this immediately implies the following: 

\begin{corollary}[{cf.\ \cite[\S 7]{Ber13} and \cite[Theorem 1.4]{fjtBG}}]\label{BGcor}
If $X$ is Berman-Gibbs stable 
$($resp.\ Berman-Gibbs semistable$)$, then $(X,-K_X)$ is uniformly K-stable 
$($resp.\ K-semistable$)$. 
\end{corollary}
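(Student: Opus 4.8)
The plan is to prove the pointwise inequality $\delta_k(X)\ge\gamma_k(X)$ for each $k$ as in Definition \ref{Berman.Gibbs} (so that $\phi_k$ is a morphism and $\gamma_k$ is defined), and then pass to the limit via $\delta(X)=\limsup_k\delta_k(X)\ge\limsup_k\gamma_k(X)\ge\liminf_k\gamma_k(X)=\gamma(X)$. To compare the two invariants I would first rewrite both through the valuative criterion for log canonical thresholds. Combining $\lct(X;D)=\inf_F A_X(F)/v_F(D)$ with Lemma \ref{ld.of.bd} and swapping the two infima, one obtains the closed formula
\[
\delta_k(X)=\inf_F\frac{k\,h^0(-kK_X)\,A_X(F)}{\sum_{a\ge 1}h^0(-kK_X-aF)},
\]
the infimum ranging over all prime divisors $F$ over $X$. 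On the other side, since $\gamma_k(X)$ is a log canonical threshold along the diagonal, it equals the infimum of $A_{X^{N_k}}(E)/v_E(\tfrac1k\mathcal D_k)$ over prime divisors $E$ over $X^{N_k}$ whose center meets $\Delta_X$. Thus it suffices to exhibit, for each fixed $F$, one such competitor realizing a value bounded above by the corresponding quotient in the displayed formula for $\delta_k(X)$.

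For the construction I would fix a basis $s_1,\dots,s_{N_k}$ of $H^0(-kK_X)$ compatible with the filtration $\{H^0(-kK_X-aF)\}_{a\ge 0}$, so that $\sum_j v_F(s_j)=\sum_{a\ge 1}h^0(-kK_X-aF)$ as in Lemma \ref{ld.of.bd}. Taking a model $\sigma\colon Y\to X$ on which $F$ is a genuine divisor, I form the product $Y^{N_k}\to X^{N_k}$ with the $N_k$ divisors $F_i$ (the copy of $F$ in the $i$-th factor) and let $w$ be the monomial valuation along $(F_1,\dots,F_{N_k})$ with all weights equal to $1$, i.e.\ the order of total vanishing along $F_1\cap\cdots\cap F_{N_k}$. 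Its center on $X^{N_k}$ is the $N_k$-fold product of the center of $F$, which meets the diagonal, so $w$ is a legitimate competitor for $\gamma_k$. The product formula for log discrepancies gives $A_{X^{N_k}}(F_i)=A_X(F)$, and linearity of the log discrepancy in the monomial weights then yields $A_{X^{N_k}}(w)=N_k\,A_X(F)$.

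It remains to bound $w(\mathcal D_k)$ from below, which is the crux. Trivializing $\mathcal O_X(-kK_X)$ in each factor near a general point of the center, the section cutting out $\mathcal D_k$ becomes $\det(s_j(x_i))_{i,j}$; expanding the determinant, every monomial $\prod_i s_{\rho(i)}(x_i)$ attached to a permutation $\rho$ has $w$-value $\sum_i v_F(s_{\rho(i)})=\sum_j v_F(s_j)$, which is independent of $\rho$. Hence $w(\mathcal D_k)\ge\sum_j v_F(s_j)=\sum_{a\ge 1}h^0(-kK_X-aF)$, so that
\[
\gamma_k(X)\le\frac{A_{X^{N_k}}(w)}{w(\tfrac1k\mathcal D_k)}=\frac{k\,N_k\,A_X(F)}{w(\mathcal D_k)}\le\frac{k\,h^0(-kK_X)\,A_X(F)}{\sum_{a\ge 1}h^0(-kK_X-aF)}.
\]
Taking the infimum over $F$ gives $\gamma_k(X)\le\delta_k(X)$, and the limit statement follows. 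I expect the main obstacle to be precisely this last estimate: justifying that the determinantal section has $w$-order at least $\sum_j v_F(s_j)$ requires care with the line-bundle trivialization and with the fact that $w$ is genuinely computed by the total-order monomial valuation, so that cancellation among the equal-valued permutation terms cannot violate the lower bound (and, crucially, only the inequality is needed, not equality). The ancillary points — the product formula for $A_{X^{N_k}}(F_i)$ and the valuative description of $\gamma_k$ as an infimum over divisors with center meeting $\Delta_X$ — are standard but should be stated precisely.
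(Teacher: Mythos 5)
Your proposal is correct and follows essentially the same route as the paper: the paper also reduces Corollary \ref{BGcor} to the inequality $\delta_k(X)\ge\gamma_k(X)$ (its Theorem \ref{BGthm}) combined with Theorem \ref{paraph}, and proves that inequality by testing $\gamma_k$ against the exceptional divisor $G$ of the blowup of $Y^{N_k}$ along $F^{N_k}$ --- which is exactly your weight-$(1,\dots,1)$ monomial valuation --- with $A_{X^{N_k}}(G)=N_k A_X(F)$ and $\mathrm{ord}_G(\mathcal D_k)\ge\sum_{j\ge1}h^0(-kK_X-jF)$. Your determinant-expansion justification of the latter bound is a welcome elaboration of a step the paper only asserts, but it is not a different argument.
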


\begin{proof}[Proof of Theorem \ref{BGthm}]
Take any prime divisor $F$ over $X$ and take any log resolution $\sigma\colon Y\to X$ 
with $F$ smooth divisor on $Y$. 
For any $k\in\mathbb{Z}_{>0}$ with $-kK_X$ Cartier, set $\psi_k:=\phi_k\circ\sigma
\colon Y\to\mathbb{P}^{N_k-1}$. Then $\psi_k=\phi_{|\sigma^*(-kK_X)|}$, and 
$\Psi_k:=(\phi_k\circ\sigma)^{N_k}=\Phi_k\circ\sigma^{N_k}\colon
Y^{N_k}\to(\mathbb{P}^{N_k-1})^{N_k}$ satisfies that $\Psi_k^*\Det_{N_k}=(\sigma^{N_k}
)^*\mathcal{D}_k$. Thus the multiplicity of 
$(\sigma^{N_k})^*\mathcal{D}_k$ along $F^{N_k}=F\times\cdots\times F\subset Y^{N_k}$
is bigger than or equal to 
\[
\sum_{j=1}^\infty h^0(-kK_X-jF).
\]
On the other hand, we have 
\[
(\sigma^{N_k})^*K_{X^{N_k}}=K_{Y^{N_k}}-\sum_{
\substack{F_l\subset Y\\
\sigma\text{{-exceptional}}
}}\sum_{i=1}^{N_k}(A_X(F_l)-1)p_i^*F_l,
\]
where $p_i\colon X^{N_k}\to X$ is the $i$-th projection morphism. 
Let $\mathcal{Z}\to Y^{N_k}$ be the blowup along $F^{N_k}$ and let 
$G\subset\mathcal{Z}$ be the exceptional divisor. Then we have 
\begin{eqnarray*}
0\leq A_{\left(X^{N_k}, \frac{\gamma_k(X)}{k}\mathcal{D}_k\right)}(G)
\leq
N_k\cdot A_X(F)-\frac{\gamma_k(X)}{k}\sum_{j=1}^{\infty}h^0(-kK_X-jF).
\end{eqnarray*}
This implies that 
\[
A_X(F)\geq \frac{\gamma_k(X)}{k\cdot N_k}\sum_{j=1}^{\infty}
h^0(-kK_X-jF).
\]
Thus $A_{(X, \gamma_k(X)D)}(F)\geq 0$ for any $k$-basis type divisor $D$ and for 
any prime divisor $F$ over $X$ by Lemma \ref{ld.of.bd}. Hence we get 
the inequality $\delta_k(X)\geq \gamma_k(X)$. 
\end{proof}

\begin{remark}
From Theorem \ref{BGthm}, it also follows that if the Berman-Gibbs stability is 
equivalent to the K-stability (cf., \cite[\S 7]{Ber13}), our Conjecture \ref{Main.conj} 
is also true. 

\end{remark}


\section{Relation with the alpha invariant}

Let us recall the basic of the alpha invariants \cite{Tia87} which was first introduced in 
terms of the K\"ahler potentials. Later it was proved to be the same as the following algebraic version \textit{global log canonical 
threshold} which we use. 

\begin{theorem}[cf., \cite{Dem08}]\label{Dem}
For an arbitrary Fano manifold $X$, 
$$\alpha(X)=\sup\{\alpha>0\mid (X,\alpha D) \mbox{ is log canonical for any effective } D
\sim_\mathbb{Q} -K_{X}\}.$$
\end{theorem}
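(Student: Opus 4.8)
The plan is to express both the analytic alpha invariant and the right-hand side of Theorem \ref{Dem} as an infimum of log canonical thresholds taken over positive closed $(1,1)$-currents in the class $c_1(X)=-K_X$, and then to show that the two infima coincide. Fix once and for all a smooth Kähler form $\omega\in c_1(X)$. By definition Tian's invariant is
\[
\alpha(X)=\sup\left\{\alpha>0 : \sup_{\varphi}\int_X e^{-\alpha(\varphi-\sup_X\varphi)}\,\omega^n<\infty\right\},
\]
the inner supremum being over all $\omega$-plurisubharmonic potentials $\varphi$. To each such $\varphi$ I attach the positive closed current $T_\varphi=\omega+i\partial\bar\partial\varphi\ge 0$ representing $-K_X$, together with its analytic threshold $c(\varphi):=\sup\{c>0 : e^{-c\varphi}\in L^1_{loc}\}$. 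Comparing the global integral above with its local versions at the points of worst singularity, and using compactness of $X$, a routine normalization shows $\alpha(X)=\inf_{\varphi} c(\varphi)$.

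On the algebraic side, I would invoke the Demailly--Kollár identification of analytic and algebraic multiplier ideals: for an effective $\mathbb{Q}$-divisor $D\sim_\mathbb{Q}-K_X$ with local equation $g$ one has $\lct(X;D)=\sup\{c : |g|^{-2c}\in L^1_{loc}\}$, and this analytic threshold agrees with the one computed by log resolution. Since the singular weight $\varphi_D=\tfrac{1}{m}\log|g|$ attached to $D=\tfrac1m D_m$ (with $D_m\in|-mK_X|$) satisfies $T_{\varphi_D}=[D]$, the right-hand side of the theorem is exactly $\inf_D c(\varphi_D)$: the same quantity $c(\cdot)$, but with the infimum restricted to the currents arising from honest $\mathbb{Q}$-divisors linearly equivalent to $-K_X$.

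With both sides written as infima of $c(\cdot)$, one inequality is immediate, since divisorial currents form a subfamily of all positive currents in $-K_X$: thus $\alpha(X)=\inf_\varphi c(\varphi)\le \inf_D c(\varphi_D)=\glct(X)$. The substance of the theorem is the reverse inequality $\glct(X)\le\alpha(X)$, which asserts that a general $\omega$-psh potential---whose singularities may be transcendental---cannot have strictly smaller threshold than every algebraic divisor. For this I would fix an arbitrary $\varphi$ and apply Demailly's regularization: the Bergman-kernel construction built from the Ohsawa--Takegoshi extension theorem produces potentials $\psi_m=\tfrac{1}{2m}\log\sum_j|s_j|^2$, for $\{s_j\}$ a basis of $H^0(-mK_X)$, with $\psi_m\searrow\varphi$ and $c(\psi_m)\to c(\varphi)$. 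Choosing any single member $D_m\in|-mK_X|$ with equation $s$, the inequality $|s|^2\le\sum_j|s_j|^2$ gives $\varphi_{D_m}\le\psi_m+O(1)$, hence $c(\varphi_{\frac1m D_m})\le c(\psi_m)$; therefore $\glct(X)\le c(\psi_m)$ for every $m$, and letting $m\to\infty$ yields $\glct(X)\le c(\varphi)$. Taking the infimum over $\varphi$ gives $\glct(X)\le\alpha(X)$.

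The main obstacle is precisely the convergence $c(\psi_m)\to c(\varphi)$ of the integrability thresholds under regularization. This is the delicate analytic input: it requires that the Lelong numbers of $\psi_m$ converge to those of $\varphi$, that these control the $L^1$-thresholds through Skoda's integrability estimates, and that the passage from Lelong-number convergence to threshold convergence be closed off by the Demailly--Kollár semicontinuity (openness) of integrability, together with Demailly's effective comparison of the multiplier ideals $\mathcal{I}(\psi_m)$ and $\mathcal{I}(\varphi)$. It is exactly at this step that transcendental singularities are ruled out as a possible source of smaller thresholds, so that the infimum over all currents is already detected by algebraic members of $\tfrac1m|-mK_X|$.
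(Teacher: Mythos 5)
First, a structural remark: the paper does not prove Theorem \ref{Dem} at all. It is quoted from Demailly's appendix \cite{Dem08}, and the authors immediately adopt the right-hand side as their working \emph{definition} of $\alpha(X)$, so no argument for it appears anywhere in the text. Your proposal must therefore be compared with Demailly's original argument, and in outline it reconstructs exactly that: write $\alpha(X)=\inf_\varphi c(\varphi)$ over $\omega$-psh potentials, write the global log canonical threshold as the same infimum restricted to divisorial potentials $\varphi_D$, get $\alpha(X)\le\glct(X)$ formally, and prove $\glct(X)\le\alpha(X)$ by Bergman-kernel (Ohsawa--Takegoshi) approximation. The skeleton is the right one.

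As a self-contained proof, however, it has a genuine gap, exactly where you flag it, and your list of tools for closing it is partly off target. Ohsawa--Takegoshi gives $\psi_m\ge\varphi-C/m$, hence only $c(\psi_m)\ge c(\varphi)$, which is the direction you do \emph{not} need in the chain $\glct(X)\le c(\varphi_{\frac1m D_m})\le c(\psi_m)$; what is needed is $\limsup_m c(\psi_m)\le c(\varphi)$. Convergence of Lelong numbers plus Skoda's estimates cannot deliver this: Skoda only brackets the threshold between $1/\nu$ and $n/\nu$, so in dimension $\ge 2$ one loses a factor of $n$; and Demailly--Koll\'ar semicontinuity is \emph{lower} semicontinuity of thresholds along $L^1_{loc}$ limits, i.e., again the wrong direction. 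The only item on your list that actually yields the inequality is the ``effective comparison of multiplier ideals,'' namely the Demailly--Ein--Lazarsfeld subadditivity theorem, which gives an inclusion of $\mathcal{I}(\lambda\psi_m)$ into a multiplier ideal of a slightly rescaled $\varphi$ and hence a bound of the shape $1/c(\psi_m)\ge 1/c(\varphi)-n/m$; this is precisely the step you neither state nor prove, and it is the real content of the theorem. Two further slips: the $s_j$ must form an orthonormal basis of $\mathcal{H}(m\varphi)=\{s\in H^0(-mK_X)\,:\,\int_X|s|^2e^{-2m\varphi}<\infty\}$ with respect to the $\varphi$-weighted inner product (for an arbitrary basis of $H^0(-mK_X)$ the function $\psi_m$ has nothing to do with $\varphi$), and $D_m$ must then be the divisor of one of these $s_j$, since the pointwise bound $|s|^2\le\sum_j|s_j|^2$ is false for an arbitrary member of $|-mK_X|$; and your ``routine normalization'' identifying Tian's definition with $\inf_\varphi c(\varphi)$ is itself the Demailly--Koll\'ar compactness/semicontinuity argument rather than a routine step, since finiteness of each individual integral does not give a bound uniform in $\varphi$.
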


In this paper, we treat the right hand side as the definition of alpha invariant as 
we do not think there would be some confusion. 
In particular, the definition naturally extends to $\mathbb{Q}$-Fano varieties. 

The purpose of this section is to give an algebraic new proof of the following theorem 
from our perspective. 

\begin{theorem}[cf., \cite{Tia87,OS12,Der14,BHJ15}]\label{alpha.K}
For a $\mathbb{Q}$-Fano variety $X$ of dimension $n$, 
if $\alpha(X)> (\text{resp., } \ge) n/(n+1)$, then $(X,-K_X)$ 
is uniformly K-stable $($resp., K-semistable$)$. 
\end{theorem}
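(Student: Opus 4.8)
The plan is to deduce Theorem \ref{alpha.K} from the valuative criterion of Theorem \ref{unif_thm} (equivalently Theorem \ref{LiFujita}) by converting the hypothesis on $\alpha(X)$ into the integral inequality demanded there. First I would record the valuative reformulation of the alpha invariant. By Theorem \ref{Dem}, $(X,cD)$ is log canonical for every effective $D\sim_\mathbb{Q}-K_X$ precisely when $A_X(F)\ge c\cdot\mathrm{ord}_F(D)$ for every prime divisor $F$ over $X$ and every such $D$; taking the supremum of $\mathrm{ord}_F(D)$ over $D$, which is exactly $\tau(F)$, this gives
\[
\alpha(X)=\inf_{F}\frac{A_X(F)}{\tau(F)},
\]
the infimum running over all prime divisors $F$ over $X$. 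Hence the hypothesis means $A_X(F)\ge\alpha(X)\tau(F)$ for all $F$, with $\alpha(X)>n/(n+1)$ (resp.\ $\ge n/(n+1)$).

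The crux is the sharp upper bound
\[
\int_{x=0}^{\infty}\vol(-K_X-xF)\,dx\le\frac{n}{n+1}(-K_X)^{\cdot n}\,\tau(F),
\]
which should be regarded as dual to the lower bound of Lemma \ref{vol.tau}. I would prove it as follows. Writing $V(x):=\vol(-K_X-xF)$, which vanishes for $x\ge\tau(F)$, integration by parts gives
\[
\int_{0}^{\tau(F)}V(x)\,dx=\int_{0}^{\tau(F)}x\,(-V'(x))\,dx
\]
since $V(\tau(F))=0$. By the Lazarsfeld--Musta\c{t}\u{a} description of the derivative of the volume function (cf.\ \cite{LM}), $-V'(x)=n\,g(x)$, where $g(x)$ is the restricted volume of $\sigma^*(-K_X)-xF$ along $F$ and $g^{1/(n-1)}$ is concave on the locus where it is positive. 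Consequently $\int_{0}^{\tau(F)}(-V')\,dx=V(0)=(-K_X)^{\cdot n}$, and the ratio $\int x(-V')/\int(-V')$ is the $g$-weighted mean of $x$ on $[0,\tau(F)]$. An elementary moment inequality for powers of concave functions bounds this mean by $\tfrac{n}{n+1}\tau(F)$, with equality when $g$ is proportional to $x^{n-1}$; this yields the displayed estimate.

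Combining the two inputs, for every $F$ we obtain
\[
\int_{x=0}^{\infty}\vol(-K_X-xF)\,dx\le\frac{n}{n+1}(-K_X)^{\cdot n}\tau(F)\le\frac{n}{(n+1)\alpha(X)}A_X(F)(-K_X)^{\cdot n}.
\]
If $\alpha(X)>n/(n+1)$, then $\tfrac{n}{(n+1)\alpha(X)}<1$, so there is a uniform $\varepsilon>0$ with $(1-\varepsilon)A_X(F)(-K_X)^{\cdot n}\ge\int_{0}^{\infty}\vol(-K_X-xF)\,dx$ for all $F$, and Theorem \ref{unif_thm} gives uniform K-stability. If only $\alpha(X)\ge n/(n+1)$, the same chain gives $\beta(F)\ge 0$ for all $F$, so $(X,-K_X)$ is K-semistable by Theorem \ref{LiFujita}. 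One could equally route this through $\delta(X)$: since $m_k(F):=\tfrac{1}{k\,h^0(-kK_X)}\sum_{a\ge1}h^0(-kK_X-aF)\to \tfrac{1}{(-K_X)^{\cdot n}}\int_0^\infty\vol(-K_X-xF)\,dx$, the estimate shows $\delta(X)\ge\frac{n+1}{n}\alpha(X)$, after which Theorem \ref{paraph} applies directly.

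The main obstacle is the sharp constant in the key estimate. The $\vol^{1/n}$-concavity that powers Lemma \ref{vol.tau} yields only the trivial bound $\int\vol\le(-K_X)^{\cdot n}\tau(F)$, which would force the far weaker threshold $\alpha(X)>1$; indeed this trivial bound cannot be improved from $\vol^{1/n}$-concavity alone. Extracting the factor $n/(n+1)$ genuinely requires the finer concavity of the restricted volume $g$, and some care is needed at valuations $F$ where $g$ does not vanish at $x=\tau(F)$ and where $g^{1/(n-1)}$ is concave only on the open locus where it is positive, so that the moment inequality must be applied with these endpoint subtleties in mind.
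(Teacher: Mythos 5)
Your overall architecture coincides with the paper's: both arguments reduce Theorem \ref{alpha.K} to the two inequalities $\alpha(X)\cdot\tau(F)\le A_X(F)$ (this is exactly Lemma \ref{alph_lem}) and
\[
\int_{0}^{\infty}\vol(-K_X-xF)\,dx\le\frac{n}{n+1}\,\tau(F)\,(-K_X)^{\cdot n},
\]
and then conclude via Theorem \ref{unif_thm} in the strict case and via part (i) of Theorem \ref{LiFujita} in the semistable case. Where you genuinely diverge is in the proof of the displayed volume bound. The paper first restricts to \emph{dreamy} divisors (which suffices by \cite[Theorem 1.3]{Fjt16}), uses finite generation to produce an ample model $X'$ (Claim \ref{alph_claim}), identifies the ratio $\int x\,Q(x)dx/\int Q(x)dx$ with the first barycenter coordinate of an Okounkov body whose flag starts with $Z_1=F'$, and invokes Hammer's centroid theorem \cite{hammer}. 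You instead differentiate the volume function, write $-V'(x)=n\,g(x)$ with $g$ the restricted volume along $F$, and apply a moment inequality for $(n-1)$-st powers of concave functions. These are two faces of the same convexity principle --- your moment inequality is equivalent to the centroid bound via Brunn--Minkowski --- and your route is attractive in that it avoids the MMP reduction and would treat all divisorial valuations at once.

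There is, however, a genuine unresolved point in your version, which you gesture at but do not close. The identity $-V'(x)=n\,g(x)$ with $g$ the restricted volume, and the concavity of $g^{1/(n-1)}$, rely respectively on the differentiability theorem for volume functions and on the log-concavity of restricted volumes, and both are only available for $x$ with $F\not\subset\mathbf{B}_+(\sigma^*(-K_X)-xF)$. This failure locus is not merely an ``endpoint'' phenomenon: a priori it can occupy a sub-interval of $(0,\tau(F))$, and one must also separate the big threshold (where $\vol(-K_X-xF)$ reaches zero) from the pseudo-effective threshold $\tau(F)$. On the bad locus the derivative is $n$ times a positive intersection number $\langle(\sigma^*(-K_X)-xF)^{n-1}\rangle\cdot F$, for which the $(n-1)$-st-root concavity along the segment is not a direct citation. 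The paper's detour through dreamy divisors and the ample model exists precisely to sidestep this: on $X'$ the relevant classes are ample, so \cite[Corollary 4.25]{LM} applies cleanly. Finally, your alternative route through $\delta(X)\ge\frac{n+1}{n}\alpha(X)$ requires the convergence of $\frac{1}{k\,h^0(-kK_X)}\sum_{a\ge1}h^0(-kK_X-aF)$ to be uniform in $F$, which is exactly the technical difficulty the authors flag around Conjecture \ref{Main.conj}; your main route through Theorem \ref{unif_thm} avoids this, and is the one to keep.
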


Given the equivalence \ref{Dem} above, the above result can also be seen as a 
K-stability criterion in terms of anticanonical $\mathbb{Q}$-divisors. 
The statement is an algebraic counterpart of Tian's original statement that 
``For a Fano manifold $X$ with $\alpha(X)>n/(n+1)$, there exists a K\"ahler-Einstein metric'' \cite{Tia87}. First algebraic proof of the K-stability (Theorem \ref{alpha.K}) was obtained by the first author and 
Sano \cite{OS12} and then was later 
refined to the uniform K-stability by Dervan \cite{Der14} and Boucksom-Hisamoto-Jonsson 
\cite{BHJ15}. Both analyze the 
test configurations, thus the arguments are based on 
$(n+1)$-dimensional birational geometry. Our proof here 
is very different, in particular, is based on $n$-dimensional birational geometry 
and make use of the Okounkov body. 

We prepare the following lemma in order to prove Theorem \ref{alpha.K}.

\begin{lemma}\label{alph_lem}
For any $\mathbb{Q}$-Fano variety $X$ and for any prime divisor $F$ over $X$, 
we have $\alpha(X)\cdot\tau(F)\leq A_X(F).$
\end{lemma}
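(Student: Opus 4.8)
The plan is to exploit the definition of $\tau(F)$ to manufacture effective anticanonical $\mathbb{Q}$-divisors vanishing to high order along $F$, and then to constrain their log discrepancies using the global log canonical threshold characterization of $\alpha(X)$ recorded in Theorem \ref{Dem}. So fix a prime divisor $F$ over $X$, realized on a normal blow-up $\sigma\colon Y\to X$, and write $v_F=\operatorname{ord}_F$. By the very definition of $\tau(F)$, for any rational number $a/k<\tau(F)$ with $a,k\in\mathbb{Z}_{>0}$ there is a nonzero section $s\in H^0(-kK_X-aF)$, i.e.\ an effective divisor $D'\sim -kK_X$ with $v_F(D')\geq a$. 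Rescaling, I set $D:=\tfrac{1}{k}D'$, which is an effective anticanonical $\mathbb{Q}$-divisor $D\sim_{\mathbb{Q}}-K_X$ satisfying $v_F(D)\geq a/k$.

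Next I would invoke the interpretation of $\alpha(X)$ in Theorem \ref{Dem}, which gives $\alpha(X)\leq\lct(X;D)$ for every effective $D\sim_{\mathbb{Q}}-K_X$; in particular the pair $(X,\alpha(X)D)$ is log canonical. Log canonicity is a condition on all divisors over $X$, so it applies to our chosen $F$, yielding $A_{(X,\alpha(X)D)}(F)\geq 0$. Combining this with the log discrepancy formula $A_{(X,cD)}(F)=A_X(F)-c\,v_F(D)$ used already in Lemma \ref{ld.of.bd}, I obtain
\[
A_X(F)\;\geq\;\alpha(X)\,v_F(D)\;\geq\;\alpha(X)\cdot\frac{a}{k}.
\]
Since this holds for every rational $a/k<\tau(F)$, letting $a/k\to\tau(F)$ gives $A_X(F)\geq\alpha(X)\,\tau(F)$, which is the assertion.

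The argument is essentially a one-line estimate once the dictionary is set up, so there is no serious obstacle; the only points requiring care are bookkeeping ones. First, one must check that the vanishing order of the \emph{section} $s$ along the generic point of $F$ indeed equals $v_F$ of the corresponding effective divisor, so that the rescaled $D$ genuinely has $v_F(D)\geq a/k$. Second, because $\tau(F)$ is defined as a supremum that need not be attained, the inequality is obtained only after passing to the limit $a/k\to\tau(F)$; using $\alpha(X)=\lct(X;D)$-type equality at the threshold avoids any issue of strict versus non-strict inequality in $\alpha$, since $(X,\alpha(X)D)$ is itself log canonical.
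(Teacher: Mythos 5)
Your proposal is correct and follows essentially the same route as the paper: both produce an effective $\mathbb{Q}$-divisor $D\sim_{\mathbb{Q}}-K_X$ with $v_F(D)$ close to $\tau(F)$ from a section of $H^0(-kK_X-aF)$ and then compare $\alpha(X)$ with $A_X(F)/v_F(D)$ via the log discrepancy formula. The only difference is cosmetic: the paper phrases the estimate contrapositively, observing that $A_{(X,(A_X(F)/\tau)D)}(F)\leq 0$ forces $\alpha(X)\leq A_X(F)/\tau$, whereas you use log canonicity of $(X,\alpha(X)D)$ directly.
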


\begin{proof}
Assume that $k\in\mathbb{Z}_{>0}$ and $\tau\in\mathbb{R}_{\geq 0}$ satisfies that 
$H^0(X, -kK_X-k\tau F)\neq 0$. Then we can find an effective 
$\mathbb{Q}$-divisor $D$ 
with $D\sim_\mathbb{Q} -K_X$ such that $A_{(X, (A_X(F)/\tau)D)}(F)\leq 0$. Thus we have 
$A_X(F)/\tau\geq \alpha(X)$. 
\end{proof}

\begin{proof}[Proof of Theorem \ref{alpha.K}]
Assume that $\alpha(X)>n/(n+1)$ (resp.\ $\geq n/(n+1)$). 
We can take $\delta\in(0,1)$ (resp.\ $\delta\in[0,1)$) such that 
$\delta\leq(n+1)(\alpha(X)-(n/(n+1)))$. 
Pick any dreamy prime divisor $F$ over $X$ in the sense of \cite{Fjt16}, that is, 
the graded algebra
\[
\bigoplus_{k,j\in\mathbb{Z}_{\geq 0}}H^0(-kK_X-jF)
\]
is finitely generated.

\begin{claim}\label{alph_claim}
There exists a normal projective variety $X'$ with $-K_{X'}$ $\mathbb{Q}$-Cartier 
and a prime divisor $F'$ on $X'$ which is $\mathbb{Q}$-Cartier such that: 
\begin{itemize}
\item
for any $k\in\mathbb{Z}_{>0}$ and $x\in\mathbb{R}_{>0}$ with $-kK_X$ Cartier, we 
have the equality
\[
H^0(-kK_X-xF)=H^0(X', k(-K_{X'}+(A_X(F)-1)F')-xF'),
\]
\item
for any $0<\varepsilon\ll 1$, $(-K_{X'}+(A_X(F)-1)F')-\varepsilon F'$ is ample. 
\end{itemize}
\end{claim}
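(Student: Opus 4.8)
The plan is to realize the divisor $F$, which a priori lives only on some blow-up $\sigma\colon Y\to X$, as an honest prime divisor on a suitable model of $X$ itself. Concretely, I would produce a projective birational morphism $\mu\colon X'\to X$ from a normal projective variety $X'$ whose unique $\mu$-exceptional divisor is a prime $\mathbb{Q}$-Cartier divisor $F'$ with $\operatorname{ord}_{F'}=\operatorname{ord}_{F}$, and with $-F'$ being $\mu$-ample. Granting such a $\mu$, the ramification formula reads $K_{X'}=\mu^{*}K_X+(A_X(F)-1)F'$, which is exactly the assertion that $-K_{X'}+(A_X(F)-1)F'=\mu^{*}(-K_X)$. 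In particular $-K_{X'}$ is $\mathbb{Q}$-Cartier, and both bullet points of the Claim become statements about the class $\mu^{*}(-K_X)$ together with the $\mu$-ample class $-F'$.

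The construction of $\mu$ is where the dreamy hypothesis enters, and is the technical heart of the argument. I would start from the valuation ideals $\mathfrak{a}_j:=\sigma_{*}\mathcal{O}_Y(-jF)\subseteq\mathcal{O}_X$ and consider the $\mathcal{O}_X$-algebra $\mathcal{A}:=\bigoplus_{j\ge 0}\mathfrak{a}_j$. Because $-K_X$ is ample, a fixed power $-k_0K_X$ is globally generated, so the bigraded pieces $H^{0}(-kK_X-jF)=H^{0}(X,\mathcal{O}_X(-kK_X)\otimes\mathfrak{a}_j)$ recover the sheaves $\mathfrak{a}_j$ from global sections; finite generation of the dreamy bigraded ring $\bigoplus_{k,j}H^{0}(-kK_X-jF)$ thereby forces finite generation of $\mathcal{A}$ as an $\mathcal{O}_X$-algebra. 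I would then take $X'$ to be the normalization of $\operatorname{Proj}_X\mathcal{A}$ (equivalently, the relative ample model of $-F'$ over $X$), so that $\mathcal{O}_{X'}(1)\cong\mathcal{O}_{X'}(-F')$ is $\mu$-ample and $F'$ is $\mathbb{Q}$-Cartier. Arranging that $F'$ is the \emph{only} $\mu$-exceptional divisor is the delicate point: the naive normalized blow-up may extract auxiliary divisors, and these must be contracted by passing to the relative ample model, whose existence is again guaranteed by the finite generation coming from dreaminess.

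With $\mu$ in hand the two bullets are routine. For the first, since $\mu_{*}\mathcal{O}_{X'}=\mathcal{O}_X$ the projection formula identifies $H^{0}(X',\mu^{*}(-kK_X)-xF')$ with the space of sections $s\in H^{0}(X,-kK_X)$ satisfying $\operatorname{ord}_F(s)\ge x$, which is precisely $H^{0}(-kK_X-xF)$; substituting $\mu^{*}(-K_X)=-K_{X'}+(A_X(F)-1)F'$ yields the stated equality. For the second, $A:=-K_X$ is ample on $X$ and $B:=-F'$ is $\mu$-ample, so for $0<\varepsilon\ll 1$ one has $\mu^{*}A+\varepsilon B=\varepsilon\bigl(B+\varepsilon^{-1}\mu^{*}A\bigr)$, and $B+m\mu^{*}A$ is ample for $m\gg 0$ by the standard relative-ampleness criterion; hence $(-K_{X'}+(A_X(F)-1)F')-\varepsilon F'=\mu^{*}(-K_X)-\varepsilon F'$ is ample for all sufficiently small $\varepsilon>0$.

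Thus the only genuine obstacle is the birational-geometric construction of $X'$: deducing finite generation of the valuation-ideal algebra from the dreamy condition and, above all, producing a divisorial extraction of the prescribed valuation $\operatorname{ord}_F$ with exactly one exceptional divisor, $\mathbb{Q}$-Cartier and relatively anti-ample. Once that model exists, the section identity and the ampleness statement follow formally from the projection formula and the elementary rescaling of relatively ample classes.
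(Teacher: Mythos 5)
Your overall strategy---realize $\operatorname{ord}_F$ on a model where $F$ becomes a $\mathbb{Q}$-Cartier prime divisor with the perturbed anticanonical class ample, then read off both bullets formally---is the same as the paper's, and your formal deductions (the ramification formula giving $\mu^{*}(-K_X)=-K_{X'}+(A_X(F)-1)F'$, the projection-formula identification of sections, and the ampleness of $\mu^{*}(-K_X)-\varepsilon F'$ from $\mu$-ampleness of $-F'$) are all correct \emph{granted the model}. The gap is exactly where you locate it, and your proposed repair does not close it. You take $X'$ to be (the normalization of) $\operatorname{Proj}_X\mathcal{A}$ with $\mathcal{A}=\bigoplus_{j}\mathfrak{a}_j$, concede that this may extract auxiliary divisors besides $F'$, and propose to remove them by ``passing to the relative ample model''; but $\operatorname{Proj}_X\mathcal{A}$ already \emph{is} the relative ample model of the graded family $\mathfrak{a}_\bullet$ over $X$, and the relative ample model of $-F'$ over $X$ would again be $\operatorname{Proj}_X\bigoplus_j\mu_*\mathcal{O}_{X'}(-jF')=\operatorname{Proj}_X\mathcal{A}$, so the second step is circular and changes nothing. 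Whether the exceptional divisors of $\operatorname{Proj}_X\mathcal{A}\to X$ reduce to the single valuation $\operatorname{ord}_F$ is precisely the nontrivial content, and finite generation of $\mathcal{A}$ alone does not obviously force it.

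The paper sidesteps the need for a divisorial extraction over $X$ altogether. It fixes a log resolution $\sigma\colon Y\to X$ containing $F$ and applies \cite[Theorem 4.2]{KKL} together with \cite[Claim 6.3]{Fjt16} (where dreaminess is converted into finite generation of the full two-variable section ring $\bigoplus_{k,j}H^0(-kK_X-jF)$, not merely of $\bigoplus_j\mathfrak{a}_j$) to produce the \emph{ample model} $\varphi\colon Y\dashrightarrow X'$ of $\sigma^{*}(-K_X)-\varepsilon F$. The second bullet then holds by the definition of ample model, and the first follows because $\sigma^{*}(-K_X)-xF$ is $\varphi$-nonpositive for all $x>0$, so that \cite[Remark 2.4 (i)]{KKL} identifies its sections with those of its strict transform; no morphism $X'\to X$ and no control of the exceptional locus of a blow-up are required. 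If you wish to keep your morphism-based formulation, you should replace the $\operatorname{Proj}_X\mathcal{A}$ construction by this geography-of-models input (or by an explicit MMP producing the extraction of $F$ with $-F'$ relatively ample), since that existence statement is the entire nonformal content of the Claim.
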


\begin{proof}[Proof of Claim \ref{alph_claim}]
Take any projective birational morphism $\sigma\colon Y\to X$ with $Y$ smooth and 
$F\subset Y$.
By \cite[Theorem 4.2]{KKL} and \cite[Claim 6.3]{Fjt16}, 
we can find a birational contraction map 
$\varphi\colon Y\dashrightarrow X'$ such that the strict transform $F'\subset X'$ of 
$F$ satisfies that the map $\varphi$ is the ample model of 
$(-K_{X'}+(A_X(F)-1)F')-\varepsilon F'$ for any $0<\varepsilon\ll 1$. 
(Indeed, the strict transform of $\sigma^*(-K_X)$ on $X'$ is equal to 
$-K_{X'}+(A_X(F)-1)F'$.) The divisor $\sigma^*(-K_X)$ is of course 
$\varphi$-nonnegative, and the divisor $\sigma^*(-K_X)-\varepsilon F$ is 
$\varphi$-nonpositive for any $0<\varepsilon\ll 1$. Thus 
the divisor $\sigma^*(-K_X)-x F$ is 
$\varphi$-nonpositive for any $x\in\mathbb{R}_{>0}$. 
Thus the assertion follows from \cite[Remark 2.4 (i)]{KKL}. 
\end{proof}

Set $H':=-K_{X'}+(A_X(F)-1)F'$. Fix any rational number $0<\varepsilon\ll 1$. 
Take any complete flag in $X'$ (in the sense of \cite{LM})
\[
X'\supset Z_1\supset Z_2\supset\cdots\supset Z_n=\{\text{point}\}
\]
with $Z_1=F'$. Let us consider the Okounkov body 
$\Delta_\varepsilon:=\Delta_{Z_\bullet}(H'-\varepsilon F')\subset \mathbb{R}_{\geq 0}^n$ 
of $H'-\varepsilon F'$ with respects to the flag $Z_\bullet$ (see \cite{LM} for the 
definition of the Okounkov bodies). 
Since $H'-\varepsilon F'$ is ample, by \cite[Corollary 4.25]{LM} 
and Claim \ref{alph_claim}, we have 
\[
\vol(\Delta_\varepsilon|_{\nu_1\geq x-\varepsilon})=\frac{1}{n!}\vol_{X'}(H'-xF')
=\frac{1}{n!}\vol(-K_X-xF)
\]
for any $x\geq \varepsilon$, where 
\[
\Delta_\varepsilon|_{\nu_1\geq x-\varepsilon}:=\{(\nu_1,\dots,\nu_n)\in
\Delta_\varepsilon\,|\, \nu_1\geq x-\varepsilon\}.
\]
For any $x\geq \varepsilon$, 
let $Q(x)$ be the restricted volume of 
\[
\{(\nu_1,\dots,\nu_n)\in
\Delta_\varepsilon\,|\, \nu_1=x-\varepsilon\}.
\]
Then we have 
\begin{eqnarray*}
&&\int_\varepsilon^{\tau(F)}\vol(-K_X-xF)dx
=\int_\varepsilon^{\tau(F)}\int_x^{\tau(F)}n!\cdot Q(y)dydx\\
&=&\int_\varepsilon^{\tau(F)}\int_\varepsilon^yn!\cdot Q(y)dxdy
=\int_\varepsilon^{\tau(F)}n!\cdot(y-\varepsilon)Q(y)dy.
\end{eqnarray*}
Thus we get 
\[
\frac{\int_\varepsilon^{\tau(F)}\vol(-K_X-xF)dx}{\vol(-K_X-\varepsilon F)}=
\frac{\int_\varepsilon^{\tau(F)}(y-\varepsilon)Q(y)dy}{\int_\varepsilon^{\tau(F)}
Q(y)dy}.
\]
The right-hand side is nothing but the first coordinate (say, $b_1$) of the barycenter of 
$\Delta_\varepsilon$. The value $b_1$ satisfies that 
$b_1\leq (\tau(F)-\varepsilon)\cdot n/(n+1)$ (see for example \cite{hammer}). 
This implies that 
\[
\int_0^{\tau(F)}\vol(-K_X-xF)dx\leq\frac{n}{n+1}\tau(F)\vol(-K_X).
\]
Thus the assertion follows from Lemma \ref{alph_lem} and Theorem \ref{unif_thm} 
(or, by \cite[Theorem 1.3]{Fjt16}). 
\end{proof}

We end our notes by observing the following lower bound of alpha invariant which is 
somewhat easier. 

\begin{theorem}\label{alpha.K2}
If a $\mathbb{Q}$-Fano variety $X$ of dimension $n$ is 
K-semistable, then $\alpha(X)\ge 1/(n+1)$ holds. 
\end{theorem}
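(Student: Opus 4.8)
\textbf{Proof proposal for Theorem \ref{alpha.K2}.}

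The plan is to establish a converse-flavored estimate: I want a lower bound on $\alpha(X)$, so I will suppose $\alpha(X)$ is small and extract from a witnessing divisor a prime divisor $F$ over $X$ violating the K-semistability criterion $\beta(F)\ge 0$ of Theorem \ref{LiFujita} \eqref{LiFujita1}. Concretely, fix any $\alpha<\alpha(X)$ close to $\alpha(X)$. By the definition of the alpha invariant (Theorem \ref{Dem}), for a given small $\alpha(X)-$witness there is an effective $\mathbb{Q}$-divisor $D\sim_{\mathbb{Q}}-K_X$ such that $(X,\alpha(X)D)$ is log canonical but not klt, so some prime divisor $F$ over $X$ computes the log canonical threshold, i.e. $A_{(X,\alpha(X)D)}(F)=0$, equivalently $A_X(F)=\alpha(X)\,v_F(D)$. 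Since $D\sim_{\mathbb{Q}}-K_X$ is effective, we have $v_F(D)\le \tau(F)$, whence $A_X(F)\le \alpha(X)\,\tau(F)$; this is exactly the content already packaged in Lemma \ref{alph_lem}, read as producing a divisor $F$ with $A_X(F)/\tau(F)$ close to $\alpha(X)$.

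Next I would bound $\beta(F)$ from above using this $F$. Recall
\[
\beta(F)=A_X(F)(-K_X)^{\cdot n}-\int_0^\infty \vol(-K_X-xF)\,dx.
\]
The integrand vanishes for $x>\tau(F)$, so the integral runs up to $\tau(F)$. The crude bound $\vol(-K_X-xF)\ge 0$ is too weak; instead I would use the lower bound on the volume appearing in the proof of Lemma \ref{vol.tau}, namely $\vol(-K_X-xF)\ge (-K_X)^{\cdot n}(1-x/\tau(F))^n$ for $0\le x\le \tau(F)$ (this is the natural log-concavity estimate from \cite{LM} dual to the one used there). Integrating gives
\[
\int_0^{\tau(F)}\vol(-K_X-xF)\,dx\ge \frac{1}{n+1}(-K_X)^{\cdot n}\,\tau(F).
\]
Combining with $A_X(F)\le \alpha(X)\,\tau(F)$ yields
\[
\beta(F)\le \Big(\alpha(X)-\tfrac{1}{n+1}\Big)\tau(F)\,(-K_X)^{\cdot n}.
\]
By Theorem \ref{LiFujita} \eqref{LiFujita1}, K-semistability forces $\beta(F)\ge 0$, and since $\tau(F)(-K_X)^{\cdot n}>0$ this gives $\alpha(X)\ge 1/(n+1)$, as desired.

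The step I expect to be the main obstacle is justifying the volume lower bound $\vol(-K_X-xF)\ge (-K_X)^{\cdot n}(1-x/\tau(F))^n$ in the required generality for an arbitrary prime divisor $F$ \emph{over} $X$, together with the measurability/finiteness needed to integrate it. The proof of Lemma \ref{vol.tau} invokes exactly such a concavity estimate (there in the form $\vol(-K_X-xF)\ge (-K_X)^{\cdot n}(x/\tau(F))^n$), so I would lean on the same reference \cite{LM} and the concavity of the volume function along the segment from $-K_X$ to the pseudo-effective boundary at $\tau(F)$. A minor point to handle carefully is the passage from ``there exists a witnessing $D$ for each $\alpha<\alpha(X)$'' to a single $F$: since I only need the inequality $\beta(F)\ge 0$ to fail in the limit, it suffices to produce, for each $\varepsilon>0$, a divisor $F$ with $A_X(F)\le (\alpha(X)+\varepsilon)\tau(F)$ and then let $\varepsilon\to 0$, so no uniform choice of $F$ is actually required.
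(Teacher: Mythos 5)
Your proof is correct, but it takes a genuinely different route from the paper's. The paper works with an honest member $D\in|-kK_X|$, invokes the external result \cite[Theorem 4.10]{fjt2} in the form $\lct(X;D)\cdot(-K_X)^{\cdot n}\ge\int_0^\infty\vol(-K_X-xD)\,dx$, and then computes that integral \emph{exactly} (it equals $\tfrac{1}{k(n+1)}(-K_X)^{\cdot n}$ since $-K_X-xD\equiv(1-kx)(-K_X)$), giving $\lct(X;D)\ge\tfrac{1}{k(n+1)}$ and hence $\alpha(X)\ge\tfrac{1}{n+1}$. You instead stay entirely inside the paper's own toolkit: you extract a prime divisor $F$ over $X$ nearly computing $\alpha(X)$ (correctly handling non-attainment of the infimum via the $\varepsilon$-approximation in your last paragraph --- the first paragraph's claim that some $D$ realizes $\alpha(X)$ exactly is not justified, but you don't actually use it), combine $A_X(F)\le(\alpha(X)+\varepsilon)\tau(F)$ with the second inequality of Lemma \ref{vol.tau} to bound $\beta(F)$ from above, and conclude from Theorem \ref{LiFujita} \eqref{LiFujita1}. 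Your worry about the volume lower bound is unfounded: the estimate $\vol(-K_X-xF)\ge(-K_X)^{\cdot n}\bigl(1-x/\tau(F)\bigr)^n$ follows from concavity of $\vol^{1/n}$ on the big cone \cite{LM} applied to the segment joining $-K_X$ and $-K_X-\tau(F)F$, and is exactly what Lemma \ref{vol.tau} already asserts (the exponent base $(x/\tau(F))^n$ printed there is evidently a typo for $((\tau(F)-x)/\tau(F))^n$; the integrated conclusion $\int_0^\infty\vol(-K_X-xF)\,dx\ge\tfrac{1}{n+1}(-K_X)^{\cdot n}\tau(F)$ is the same either way, and is all you need to cite). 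What your approach buys is self-containedness --- no appeal to \cite{fjt2} --- and it is the natural companion to Lemma \ref{alph_lem}, which is the same inequality $A_X(F)\ge\alpha(X)\tau(F)$ read in the opposite direction; what the paper's approach buys is brevity and an exact rather than estimated volume integral, at the cost of importing a theorem whose proof is essentially the argument you reconstructed.
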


\begin{proof}[Proof of Theorem \ref{alpha.K2}]
Take any $k\in\mathbb{Z}_{>0}$ with $-kK_X$ Cartier and take any $D\in|-kK_X|$. 
Then, by \cite[Theorem 4.10]{fjt2}, we have 
\[
\lct(X; D)\cdot(-K_X)^{\cdot n}\geq\int_0^{\infty}\vol(-K_X-xD)dx.
\]
Since $\vol(-K_X-xD)=(1-kx)^n(-K_X)^{\cdot n}$ (if $0\leq x\leq 1/k$), 
we have $\lct(X; D)\geq 1/(k(n+1))$. 
\end{proof}

\begin{remark}\label{liu_rmk}
Yuchen Liu independently obtained Theorem \ref{alpha.K2} (\cite{liu}). 
\end{remark}

\bigskip
{\small
Research Institute for Mathematical Sciences, Kyoto University, Kyoto 606-8502. JAPAN 
\begin{flushright}
\texttt{fujita@kurims.kyoto-u.ac.jp}
\end{flushright}
\vspace{0.5cm}

Department of Mathematics, Kyoto University, Kyoto 606-8502. JAPAN 
\begin{flushright}
\texttt{yodaka@math.kyoto-u.ac.jp}
\end{flushright}
}


\begin{thebibliography}{99}



\bibitem[Ber13]{Ber13}
R.\ Berman, \emph{K\"ahler-Einstein metrics, canonical random point processes 
and birational geometry}, 
arXiv:1307.3634.

\bibitem[Ber16]{Ber12}
R.\ Berman, \emph{K-polystability of Q-Fano varieties admitting 
K\"ahler-Einstein metrics}, 
Invent.\ Math.\ \textbf{203} (2016), no.\ 3, 973--1025.

\bibitem[BHJ15]{BHJ15}
S.~Boucksom, T.~Hisamoto and M.~Jonsson, 
\emph{Uniform K-stability, Duistermaat-Heckman measures and singularities of pairs}, 
arXiv:1504.06568. 

\bibitem[CDS15a]{CDS1}
X.\ Chen, S.\ Donaldson and S.\ Sun, \emph{K\"ahler-Einstein metrics on 
Fano manifolds, 
I: approximation of metrics with cone singularities}, 
J.\ Amer.\ Math.\ Soc.\ \textbf{28} (2015), no.\ 1, 183--197. 

\bibitem[CDS15b]{CDS2}
X.\ Chen, S.\ Donaldson and S.\ Sun, \emph{K\"ahler-Einstein metrics on 
Fano manifolds, 
II: limits with cone angle less than $2\pi$}, 
J.\ Amer.\ Math.\ Soc.\ \textbf{28} (2015), no.\ 1, 199--234. 

\bibitem[CDS15c]{CDS3}
X.\ Chen, S.\ Donaldson and S.\ Sun, \emph{K\"ahler-Einstein metrics on 
Fano manifolds, 
III: limits as cone angle approaches $2\pi$ and completion of the main proof}, 
J.\ Amer.\ Math.\ Soc.\ \textbf{28} (2015), no.\ 1, 235--278. 

\bibitem[CT08]{CT}
X.\ Chen and G.\ Tian, \emph{Geometry of K\"ahler metrics and foliations by 
holomorphic discs}, Publ.\ Math.\ Inst.\ Hautes \'Etudes Sci.\ \textbf{107} 
(2008), 1--107. 

\bibitem[Dem08]{Dem08}
J.-P.\ Demailly, 
Appendix to I.\ Cheltsov and C.\ Shramov's article. 
``Log canonical thresholds of smooth Fano threefolds” : \emph{On Tian's invariant and log canonical thresholds}, 
Russian Math.\ Surveys \textbf{63} (2008), no.\ 5, 945--950. 

\bibitem[Der14]{Der14}
R.~Dervan, 
\emph{Uniform stability of twisted constant scalar curvature K\"ahler metrics}, 
arXiv:1412.0648, to appear in Int.\ Math.\ Res.\ Not.\ IMRN.

\bibitem[Don02]{Don02}
S.~Donaldson, 
\emph{Scalar curvature and stability of toric varieties}, 
J.\ Differential Geom.\ \textbf{62} (2002), no.\ 2, 289--349. 

\bibitem[Don05]{don05}
S.\ Donaldson, \emph{Lower bounds on the Calabi functional}, 
J.\ Differential Geom.\ \textbf{70} (2005), no.\ 3, 453--472. 

\bibitem[DS14]{DS14}
S.~Donaldson and S.~Sun, 
\emph{Gromov-Hausdorff limits of Kahler manifolds and algebraic geometry}, 
Acta Math.\ \textbf{213} (2014), no.\ 1, 63--106. 

\bibitem[DT92]{DT92}
W.~Ding and G.~Tian, 
\emph{K\"ahler-Einstein metrics and the generalized Futaki invariants}, 
Invent.\ Math.\ \textbf{110}, (1992), no.\ 2, 315--335. 


\bibitem[Fjt15a]{Fjt15}
K.~Fujita, 
\emph{On K-stability and the volume functions of $\mathbb{Q}$-Fano varieties}, 
arXiv:1508.04052; accepted by Proc.\ London Math.\ Soc. 

\bibitem[Fjt15b]{fjt2}
K.\ Fujita, \emph{Optimal bounds for the volumes of K\"ahler-Einstein Fano manifolds}, 
arXiv:1508.04578; accepted by Amer.\ J.\ Math.

\bibitem[Fjt16a]{fjtBG}
K.\ Fujita, \emph{On Berman-Gibbs stability and K-stability of $\mathbb{Q}$-Fano 
varieties}, Compos.\ Math.\ \textbf{152} (2016), no.\ 2, 288--298.

\bibitem[Fjt16b]{Fjt16}
K.~Fujita, 
\emph{A valuative criterion for uniform K-stability of $\mathbb{Q}$-Fano varieties}, 
arXiv:1602.00901; accepted by J.\ Reine Angew.\ Math. 

\bibitem[FS90]{FS90}
A.~Fujiki and G.~Schumacher, 
\emph{The moduli space of extremal compact K\"ahler manifolds and generalized 
Weil-Petersson metrics}, 
Publ.\ Res.\ Inst.\ Math.\ Sci.\ \textbf{26} (1990), no.\ 1, 101--183. 

\bibitem[Ham51]{hammer}
P.\ C.\ Hammer, \emph{The centroid of a convex body}, Proc.\ Amer.\ Math.\ Soc.\ 
\textbf{2}, (1951), 522--525.

\bibitem[KKL12]{KKL}
A.-S.\ Kaloghiros, A.\ K\"uronya and V.\ Lazi\'c, \emph{Finite generation and 
geography of models}, arXiv:1202.1164; to appear in Advanced Studies in Pure Mathematics, Mathematical Society of Japan, Tokyo.

\bibitem[KM98]{KM98}
J.\ Koll{\'a}r and S.\ Mori, \emph{Birational geometry of algebraic varieties},
With the collaboration of C.\ H.\ Clemens and A.\ Corti. 
Cambridge Tracts in Math., \textbf{134},
Cambridge University Press, Cambridge, 1998.

\bibitem[Li16]{Li16}
C.~Li, 
\emph{K-semistability is equivariant volume minimization}, 
arXiv:1512.07205v2. 

\bibitem[Liu]{liu}
Y.\ Liu, \emph{private communication}.

\bibitem[LM09]{LM}
R.\ Lazarsfeld and M.\ Musta\c{t}\u{a}, \emph{Convex bodies associated 
to linear series}, 
Ann.\ Sci.\ \'Ec.\ Norm.\ Sup\'er.\ \textbf{42} (2009), no.\ 5, 783--835. 

\bibitem[LWX14]{LWX14} 
C.~Li, X.~Wang and C.~Xu, 
\emph{Degeneration of Fano K\"ahler-Einstein varieties}, 
arXiv:1411.0761

\bibitem[Mab08]{mab1}
T.\ Mabuchi, \emph{K-stability of constant scalar curvature}, arXiv:0812.4903. 

\bibitem[Mab09]{mab2}
T.\ Mabuchi, \emph{A stronger concept of K-stability}, arXiv:0910.4617. 

\bibitem[Od10]{Od10}
Y.~Odaka, 
\emph{On the GIT stability of polarised varieties - a survey -}, 
Proceeding of Kinosaki algebraic geometry symposium 2010. 

\bibitem[Od13]{Od13b}
Y.~Odaka, 
\emph{On the moduli of K\"ahler-Einstien Fano manifolds}, 
Proceeding of Kinosaki algebraic geometry symposium 2013, 
available at arXiv:1211.4833. 

\bibitem[Od15]{Od15}
Y.~Odaka, 
\emph{Compact moduli spaces of K\"ahler-Einstein Fano varieties}, 
Publ.\ Res.\ Inst.\ Math.\ Sci.\ \textbf{51} (2015), no.\ 3, 549--565. 

\bibitem[OS12]{OS12}
Y.~Odaka and Y.~Sano, 
\emph{Alpha invariants and K-stability of $\mathbb{Q}$-Fano varieties}, 
Adv.\ Math.\ \textbf{229} (2012), no.\ 5, 2818--2834. 

\bibitem[OSS16]{OSS16}
Y.~Odaka, C.~Spotti and S.~Sun, 
\emph{Compact moduli spaces of Del Pezzo surfaces and K\"ahler-Einstein metrics}, 
J.\ Differential Geom.\ \textbf{102} (2016), no.\ 1, 127--172. 

\bibitem[OSY12]{OSY12}
H.~Ono, Y.~Sano and N.~Yotsutani, \emph{An example of an asymptotically Chow unstable manifold with constant scalar curvature}, 
Ann.\ Inst.\ Fourier (Grenoble) \textbf{62} (2012), no.\ 4, 1265--1287.

\bibitem[SSY14]{SSY14}
C.~Spotti, S.~Sun and C.~Yao, 
\emph{Existence and deformations of K\"ahler-Einstein metrics on smoothable 
$\mathbb{Q}$-Fano varieties}, 
arXiv:1411.1725. 

\bibitem[Sto09]{stoppa}
J.\ Stoppa, \emph{K-stability of constant scalar curvature K\"ahler manifolds}, 
Adv.\ Math.\ \textbf{221} (2009), no.\ 4, 1397--1408. 

\bibitem[Tia87]{Tia87}
G.~Tian, 
\emph{On K\"ahler-Einstein metrics on certain K\"ahler manifolds with 
$C_1(M)>0$}, Invent.\ Math.\ \textbf{89} (1987), no.\ 2, 225--246.


\bibitem[Tia97]{tian1}
G.\ Tian, \emph{K\"ahler-Einstein metrics with positive scalar curvature}, 
Invent.\ Math.\ \textbf{130} (1997), no.\ 1, 1--37.


\bibitem[Tia15]{tian2}
G.\ Tian, \emph{K-stability and K\"ahler-Einstein metrics}, 
Comm. Pure Appl.\ Math.\ \textbf{68} (2015), no.\ 7, 1085--1156; 
Corrigendum, 
\textbf{68} (2015), no.\ 11, 2082--2083. 



\end{thebibliography}
\end{document}